\DeclareMathOperator{\Ad}{Ad}
\DeclareMathOperator{\Ric}{Ric}
\newcommand{\fr}{\mathfrak}
\newcommand{\bb}{\mathbb}
\DeclareMathOperator{\SO}{SO}
\DeclareMathOperator{\Sp}{Sp}
\DeclareMathOperator{\SU}{SU}
 \newtheorem{lemma} {Lemma} [section]
\newtheorem{theorem}[lemma]{Theorem} 
\newtheorem{remark}[lemma] {Remark} 
\newtheorem{prop} [lemma]{Proposition}
\newcommand{\thickhline}{%
    \noalign {\ifnum 0=`}\fi \hrule height 1pt
    \futurelet \reserved@a \@xhline
}
\newcolumntype{"}{@{\hskip\tabcolsep\vrule width 1pt\hskip\tabcolsep}}
\begin{document}
\title{Non naturally reductive Einstein metrics on the orthogonal group via real flag manifolds}
\author{Andreas Arvanitoyeorgos
, Yusuke Sakane and Marina Statha}
\address{University of Patras, Department of Mathematics, GR-26500 Rion, Greece
AND\newline
Hellenic Open University, Aristotelous 18, GR-26335 Patras, Greece}
\email{arvanito@math.upatras.gr}
 \address{Osaka University, Department of Pure and Applied Mathematics, Graduate School of Information Science and Technology, Toyonaka, Osaka 560-0043, Japan}
 \email{sakane@math.sci.osaka-u.ac.jp}
\address{University of Thessaly, Department of Mathematics, GR-35100 Lamia, Greece}
\email{marinastatha@uth.gr} 
\medskip

\begin{abstract}
  
We obtain new invariant Einstein metrics  on the compact Lie groups
 $\SO(n)$  which are not naturally reductive.  This is achieved by  using the real flag manifolds $\SO(k_1+\cdots +k_p)/\SO(k_1)\times\cdots\times\SO(k_p)$ and by imposing certain symmetry assumptions in the set of all 
 left-invariant metrics on $\SO(n)$.

\medskip
\noindent 2020 {\it Mathematics Subject Classification.} Primary 53C25; Secondary 53C30, 13P10, 65H10, 68W30.

\medskip
\noindent {\it Keywords}:    {Homogeneous space, Einstein metric, isotropy representation, compact Lie group, naturally reductive metric,  orthogonal  group, real flag manifold}
   \end{abstract}

\maketitle
 
 
 
 


\section{Introduction}
\markboth{Andreas Arvanitoyeorgos, Yusuke Sakane and Marina Statha}{New homogeneous Einstein metrics on quaternionic Stiefel manifolds}

A Riemannian manifold $(M, g)$ is called Einstein if it has constant Ricci curvature, i.e. $\Ric_{g}=\lambda\cdot g$ for some $\lambda\in\bb{R}$. 
  A detailed exposition on Einstein manifolds can be found in \cite{Be} and various  results about homogeneous Einstein metrics are contained in \cite{W1}, \cite{W2} and \cite{A}. 
For the case of homogeneous spaces $G/K$ one needs to find and classify all $G$-invariant Einstein metrics, and for  a 
compact Lie group the problem is to find left-invariant Einstein metrics.
The later is more subtle, since the space of all left-invariant metrics on a Lie group up to isometry and scaling is quite difficult to study. As a result, even for low dimensional Lie groups such as  $\SU(3)$ and $\SU(2)\times\SU(2)$, the number of left-invariant Einstein metrics  is still unknown.

It is well known that a compact and semisimple Lie group equipped with a bi-invariant metric is Einstein.
 In \cite{DZ}  J.E. D'Atri and W. Ziller found a large number of naturally reductive metrics on the compact Lie groups and
  they raised the question of existence of left-invariant Einstein metrics which are not naturally reductive.
  A first answer was given  by K. Mori in 
  \cite{M}, who obtained non naturally reductive Einstein metrics on the Lie group $\SU(n)$ for $n\ge 6$.  Since then, several authors obtained non naturally reductive Einstein metrics on other classical and exceptional Lie groups (e.g. \cite{AMS},  \cite{ASS2}, \cite{ASS4}, \cite{ASS5}, \cite{CL}, \cite{CCD}, ).
  
A method to investigate  left-invariant Einstein metrics on a compact Lie group $G$ is the following.  
  We consider a homogeneous space $G/K$ and decompose the tangent space $\fr{g}$ of $G$ via the submersion $G\to G/K$ with fiber $K$, 
  into a direct sum of irreducible $\Ad(K)$-modules.  Then 
  we consider left-invariant metrics on $G$ which are determined by diagonal $\Ad(K)$-invariant scalar products on $\fr{g}$. By using the bracket relations of the submodules, we can see that  Ricci curvature also satisfies  orthogonal relations for each pair of different irreducible summands. Thus,  
  by taking into account the diffeomorphism 
  $G/\{e\}\cong (G\times K)/\mbox{diag}(K)$,  we can use well known formula in \cite{PS} for the Ricci curvature of such left-invariant metrics on $G$.

 The existence of non naturally reductive left-invariant Einstein metrics on the orthogonal group $\SO(n)$ ($n\ge 11$) was originally shown by the first two authors  and K. Mori in \cite{AMS}. 
 There, the Lie group  $\SO(n)$ was considered as a total space over appropriate generalized flag manifolds with two isotropy summands.
More recently, in \cite{ASS2}, the authors  proved existence of  non naturally reductive left-invariant Einstein metrics on $\SO(n)$ for $n\ge 7$, different form the ones in \cite{AMS}.
In that work, we viewed  $\SO(n)$ as a total space over  the homogeneous space $G/K = \SO(k_1+k_2+k_3)/(\SO(k_1)\times\SO(k_2)\times\SO(k_3))$, which is an example of a  generalized Wallach space according to \cite{N}, 
and worked for the cases with small numbers $k_1, k_2, k_3$. 

Similar methods were also used by H. Chen, Z. Chen and D. Deng in \cite{CCD}, and B. Zhang, H. Chen and J. Tan in \cite{ZCT}, where they proved existence of  non naturally reductive left-invariant Einstein metrics on the orthogonal group $\SO(n)$,  for $ k_1 \geq 3, \,  k_2=3, \,  k_3=3$  and  for $ k_1 \geq k,\, k_2=k, \,  k_3=k$  ($k \geq 3$)  respectively.

   In the present paper we  generalize the above approaches and prove existence of new non naturally reductive left-invariant Einstein metrics on the orthogonal group $\SO(n)$, $n=k_1+k_2+\cdots +k_p$  (where $k_i \geq 2$ ($i =1, \dots, p$)),  by using the space 
 $\SO(k_1+\cdots +k_p)/(\SO(k_1)\times\cdots\times\SO(k_p))$.   This space  is known in the literature as a {\it real flag manifold}  (e.g.   \cite{PM}, \cite{R}).
 
 Note that 
 the isotropy representation $\fr{m}=\fr{m}_{12}\oplus\cdots \oplus\fr{m}_{1p}\oplus\fr{m}_{23}\oplus\cdots\oplus\fr{m}_{p-1, p}$ of $G/K$ does not contain equivalent irreducible summands as $\Ad(K)$-modules, except for the case $k_i =k_j =2$ for some $i, j$ ($i \neq j$).  Thus, under these conditions,  
 we see that left-invariant metrics are determined by  $\Ad(K)$-invariant inner products   of the form
 \begin{equation*}
  \langle \  \ ,\ \ \rangle =  x_1 \, (-B) |_{\fr{so}(k_1)}+ x_2 \, (-B) |_{ \fr{so}(k_2)}+ \cdots + x_p \, (-B) |_{ \fr{so}(k_p)} 
 + \sum_{1\le i<j\le p} x_{ij} \,  (-B) |_{ \fr{m}_{ij}}. 
 \end{equation*}


 We take $k_2=\cdots =k_p=k\ge 3$. 
Then the main result  is the following (see Theorems \ref{thm6.2} and \ref{thm6.3}):

\begin{theorem}\label{main}  Let $p\ge 3$. 

\noindent {\rm 1)} 
If $k_1 > k \ge 3$, then  the Lie group  $\SO(n)$ {\em ($n=k_1 +(p-1)k$)} admits at least two  $\Ad( \SO(k_1)\times(\SO(k))^{p-1})$-invariant Einstein metrics, which are not naturally reductive. 
Moreover, if ${k_{1}} \geq 10 k p$, then the Lie group  $\SO(n)$ {\em ($n=k_1 +(p-1)k$) }admits at least four non naturally reductive $\Ad( \SO(k_1)\times(\SO(k))^{p-1})$-invariant Einstein metrics.  

\noindent
{\rm 2)} If $k_1=k$, then the Lie group  $\SO(n)$ {\em ($n=p \,k$)} admits at least one $\Ad( (\SO(k))^{p})$-invariant Einstein metric, which is not naturally reductive.

\end{theorem}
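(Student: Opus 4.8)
The strategy is to exploit the permutation symmetry forced by the hypothesis $k_2=\cdots=k_p=k$ so as to cut the many-parameter family of metrics down to a tractable one. The symmetric group $S_{p-1}$ permutes the indices $\{2,\dots,p\}$ and hence acts on the factors $\fr{so}(k_i)$ and on the summands $\fr{m}_{ij}$; I would restrict attention to the $S_{p-1}$-invariant left-invariant metrics, which collapses the inner product displayed above to the four parameters obtained by setting $x_2=\cdots=x_p=u$, $x_{1j}=v$ for every $j\in\{2,\dots,p\}$, and $x_{ij}=w$ for all $2\le i<j\le p$, while keeping $x_1$. Since the Einstein condition is scale invariant, one may normalize (say $x_1=1$) and is left searching within a three-parameter family.

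With this ansatz in hand I would compute the Ricci tensor. As the isotropy summands are pairwise inequivalent --- the exceptional situation $k_i=k_j=2$ being excluded by $k\ge 3$ --- the Ricci tensor is diagonal with respect to the decomposition of $\fr{m}$, so the formula of \cite{PS}, together with the structure constants of the real flag manifold tabulated from the bracket relations of the $\fr{m}_{ij}$, yields four rational functions $r_1,r_u,r_v,r_w$ of $(x_1,u,v,w)$ and of the integers $k_1,k,p$. These represent the Ricci eigenvalues on $\fr{so}(k_1)$, on a typical factor $\fr{so}(k)$, on a typical $\fr{m}_{1j}$, and on a typical $\fr{m}_{ij}$ with $i,j\ge 2$. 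The Einstein equation then reads $r_1=r_u=r_v=r_w$, that is, three polynomial equations after clearing denominators.

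The heart of the argument, and the step I expect to be hardest, is solving this parametric system and counting its positive real solutions. I would clear denominators, compute a Gr\"obner basis under an elimination order, and reduce the system to a single univariate polynomial whose positive roots are in bijection with the Einstein metrics, the remaining unknowns being recovered rationally. Because the coefficients still depend on $k_1,k,p$, no closed symbolic solution is available, so the decisive work is a sign and root-counting analysis of this polynomial over the relevant parameter range --- by Descartes' rule, Sturm sequences, or the intermediate-value theorem at well-chosen test points. For part 1 I expect to exhibit two positive roots whenever $k_1>k\ge 3$; the stronger hypothesis $k_1\ge 10kp$ should drive a further pair into the positive region, which I would secure through an asymptotic estimate of the polynomial as $k_1$ grows, giving four solutions. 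Part 2, where $k_1=k$, is more rigid, and I would handle it by a separate specialization producing one guaranteed positive root.

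It remains to verify that the metrics so produced are not naturally reductive. The naturally reductive left-invariant metrics of the above diagonal form are governed by the D'Atri--Ziller description \cite{DZ}, and within our symmetric family they are cut out by explicit relations among $x_1,u,v,w$ --- most notably the requirement that the off-diagonal scales coincide, which in the symmetric ansatz forces $v=w$. I would therefore check at each Einstein solution that these relations fail, the governing inequality being $v\ne w$, so that the constructed metrics lie outside every naturally reductive locus, completing the proof.
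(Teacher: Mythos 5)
Your overall strategy coincides with the paper's: the same $S_{p-1}$-symmetric four-parameter ansatz, the same Ricci computation via the Park--Sakane formula, elimination down to a single univariate polynomial in the off-diagonal variable, and root counting by sign changes at test points (the paper does the elimination by hand rather than by a Gr\"obner basis, solving one equation for $x_2$ and another for $x_1$ as rational functions of $x_{23}$, and arriving at a degree-$8$ polynomial $H_1(x_{23})$ evaluated at $0,1,2$, and additionally at $2/3$ and an explicit rational point for the four-metric statement). However, there is a genuine gap in your last step. The naturally reductive locus inside this family is \emph{not} cut out by $v=w$ alone: by the paper's characterization (Proposition \ref{prop5.1}), a metric of the form (\ref{metric009}) is naturally reductive with respect to some $\SO(n)\times L$ exactly when it is bi-invariant, \emph{or} $u=w$ (i.e.\ $x_2=x_{23}$), \emph{or} $v=w$ (i.e.\ $x_{12}=x_{23}$). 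The relation $u=w$ is not a side case: one of the three Einstein equations factors as $(x_2-x_{23})\cdot(\cdots)=0$, so the solution variety splits into the naturally reductive branch $x_2=x_{23}$ (which does contain Einstein metrics, e.g.\ D'Atri--Ziller metrics associated with $L=\SO(k_1)\times\SO(k(p-1))$) and the branch the paper actually works on. A blind elimination of the full system folds both branches into your univariate eliminant, and checking only $v\ne w$ at its roots would let naturally reductive solutions slip into your count of two, four, or one metrics. The repair is what the paper does: discard the factor $x_2=x_{23}$ \emph{before} eliminating (this is also what makes the elimination tractable), observe that the resulting rational expression for $x_2$ makes $x_2=x_{23}$ impossible, and get $x_{12}\ne x_{23}$ for free because the roots produced lie in intervals avoiding $x_{23}=1=x_{12}$.

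A second, smaller gap: your assertion that the positive roots of the eliminant are ``in bijection with the Einstein metrics'' presupposes that the rationally recovered values of the remaining unknowns are positive, which is not automatic. The paper needs a separate argument precisely here: positivity of $x_2$ is immediate from its rational expression in $x_{23}$, but for $x_1$ it must observe that $g_1$ is a quadratic in $x_1$ with positive leading coefficient, positive value at $x_1=0$, and vertex at a positive abscissa, so that any real solution $x_1$ is necessarily positive. Without this (or some substitute), exhibiting sign changes of the univariate polynomial does not yet produce Riemannian metrics.
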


\smallskip

\noindent
The contribution of the above theorem, in addition to those in \cite{ASS2, CCD} and \cite{ZCT}, is that we consider the general space $\SO(n)/(\SO(k_1)\times\cdots\times\SO(k_p))$, whose isotropy representation decomposes into a number of isotropy summands that depends on  $p$, thus viewing $p$ as an additional  parameter in the algebraic system of Einstein equation.  As a consequence, it is possible to obtain a large number of new non-naturally reductive left-invariant Einstein metrics on $\SO(n)$.
  Of course, for $p \geq 4$ these metrics we obtain are different from the ones obtained in \cite{ASS2, CCD} and \cite{ZCT}.

\medskip
The paper is organized as follows:  In Section 2 we recall a general expression for the Ricci tensor for homogeneous spaces whose isotropy representation is decomposed into irreducible non equivalent summands. In Section 3 we consider a special decomposition of the Lie algebra $\fr{so}(n)$ of the special orthogonal group $G=\SO(n)=\SO(k_1+\cdots +k_p)$ by using the real flag manifold $G/(\SO(k_1)\times\cdots\times\SO(k_p))$, and then use this to describe a special class of left-invariant metrics on $G$.  In Section 4 we provide a necessary and sufficient condition so that these left-invariant metrics are naturally reductive.  In Section 5 we compute the Ricci tensor for the special class of left-invariant metrics on     $G=\SO(n)=\SO(k_1+\cdots +k_p)$ and in Section 6 we make an extensive analysis to prove existence of left-invariant Einstein metrics.  To this end, we study a non linear system of equations whose coefficients are parameters depending on  $k_1,  k, p$.

\section{The Ricci tensor for reductive homogeneous spaces}
 We recall an expression for the Ricci tensor for a $G$-invariant Riemannian
metric on a reductive homogeneous space whose isotropy representation
is decomposed into a sum of non equivalent irreducible summands.

Let $G$ be a compact semisimple Lie group, $K$ a connected closed subgroup of $G$  and  
let  $\frak g$ and $\frak k$  be  the corresponding Lie algebras. 
The Killing form $B$ of $\frak g$ is negative definite, so we can define an $\mbox{Ad}(G)$-invariant inner product $-B$ on 
  $\frak g$. 
Let $\frak g$ = $\frak k \oplus
\frak m$ be a reductive decomposition of $\frak g$ with respect to $-B$ so that $\big[\,\frak k,\, \frak m\,\big] \subset \frak m$ and
$\frak m\cong T_o(G/K)$.  
 We decompose  $ {\frak m} $ into irreducible non equivalent $\mbox{Ad}(K)$-modules as follows: \ 
\begin{equation}\label{iso}
{\frak m} = {\frak m}_1 \oplus \cdots \oplus {\frak m}_q.
\end{equation} 
Then for the decomposition (\ref{iso}) any $G$-invariant metric on $G/K$ can be expressed as  
\begin{eqnarray}
 \langle  \,\,\, , \,\,\, \rangle  =  
x_1   (-B)|_{\mbox{\footnotesize$ \frak m$}_1} + \cdots + 
 x_q   (-B)|_{\mbox{\footnotesize$ \frak m$}_q},  \label{eq2}
\end{eqnarray}
for positive real numbers $(x_1, \dots, x_q)\in \mathbb{R}^{q}_{+}$. 
If, for the decomposition (\ref{iso}) of $\frak m$,  the Ricci tensor $r$ of a $G$-invariant Riemannian metric $ \langle  \,\,\, , \,\,\, \rangle$ on $G/K$  satisfies $r({\frak m}_i, {\frak m}_j) = (0)$ for $ i \neq j$, then the Ricci tensor $r$ is of the same form as (\ref{eq2}), that is 
 \[
 r=z_1 (-B)|_{\mbox{\footnotesize$ \frak m$}_1}  + \cdots + z_{q} (-B)|_{\mbox{\footnotesize$ \frak m$}_q} ,
 \]
 for some real numbers $z_1, \ldots, z_q$.

Let $\lbrace e_{\alpha} \rbrace$ be a $(-B)$-orthonormal basis 
adapted to the decomposition of $\frak m$,    i.e. 
$e_{\alpha} \in {\frak m}_i$ for some $i$, and
$\alpha < \beta$ if $i<j$. 
We put ${A^\gamma_{\alpha
\beta}}= -B \big(\big[e_{\alpha},e_{\beta}\big],e_{\gamma}\big)$ so that
$\big[e_{\alpha},e_{\beta}\big]
= \displaystyle{\sum_{\gamma}
A^\gamma_{\alpha \beta} e_{\gamma}}$ and set 
$\displaystyle{k \brack {ij}}=\sum (A^\gamma_{\alpha \beta})^2$, where the sum is
taken over all indices $\alpha, \beta, \gamma$ with $e_\alpha \in
{\frak m}_i,\ e_\beta \in {\frak m}_j,\ e_\gamma \in {\frak m}_k$ (cf.\,\cite{WZ}).  
Then the positive numbers $\displaystyle{k \brack {ij}}$ are independent of the 
$(-B)$-orthonormal bases chosen for ${\frak m}_i, {\frak m}_j, {\frak m}_k$,
and 
$\displaystyle{k \brack {ij}}\ =\ \displaystyle{k \brack {ji}}\ =\ \displaystyle{j \brack {ki}}.  
 \label{eq3}
$

Let $ d_k= \dim{\frak m}_{k}$. Then we have the following:

\begin{lemma}\label{ric2}{\em(\cite{PS})}
The components ${ r}_{1}, \dots, {r}_{q}$ 
of the Ricci tensor ${r}$ of the metric $ \langle  \,\,\, , \,\,\, \rangle $ of the
form {\em (\ref{eq2})} on $G/K$ are given by 
\begin{equation}
{r}_k = \frac{1}{2x_k}+\frac{1}{4d_k}\sum_{j,i}
\frac{x_k}{x_j x_i} {k \brack {ji}}
-\frac{1}{2d_k}\sum_{j,i}\frac{x_j}{x_k x_i} {j \brack {ki}}
 \quad (k= 1, \dots, q),    \label{eq51}
\end{equation}
where the sum is taken over $i, j =1, \dots , q$.
\end{lemma}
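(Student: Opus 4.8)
The plan is to reduce the statement to the standard formula for the Ricci curvature of a reductive homogeneous space and then re-express every term through the structure constants ${k \brack {ij}}$. The excerpt has already observed, via Schur's lemma and the fact that the summands $\mathfrak m_1,\dots,\mathfrak m_q$ are irreducible and pairwise inequivalent, that the Ricci tensor $r$ is diagonal with respect to the decomposition. Hence it suffices to determine the single eigenvalue $r_k$ of the Ricci operator on each block $\mathfrak m_k$, which I obtain by averaging the pointwise Ricci quadratic form over a $\langle\,,\,\rangle$-orthonormal basis of $\mathfrak m_k$, namely $r_k=\frac{1}{d_k}\sum_X \mathrm r(X,X)$ with $X$ ranging over that basis.

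The starting point is the general expression for the Ricci curvature of an invariant metric on a reductive homogeneous space (see \cite{Be}): for a $\langle\,,\,\rangle$-orthonormal basis $\{X_i\}$ of $\mathfrak m$ and $X\in\mathfrak m$,
\[
\mathrm r(X,X)=-\tfrac12\sum_i\bigl|[X,X_i]_{\mathfrak m}\bigr|^2-\tfrac12 B(X,X)+\tfrac14\sum_{i,j}\langle[X_i,X_j]_{\mathfrak m},X\rangle^2-\langle[Z,X]_{\mathfrak m},X\rangle ,
\]
where $Z\in\mathfrak m$ is the mean-curvature vector determined by $\langle Z,X\rangle=\tr(\ad X)$. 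Since $G$ is compact, $\ad X$ is skew with respect to $-B$, so $\tr(\ad X)=0$ and $Z=0$: the last term drops out. The crucial point is that the Killing term already yields the leading summand of \eqref{eq51}: for $X\in\mathfrak m_k$ with $\langle X,X\rangle=1$ (equivalently $(-B)(X,X)=1/x_k$) one gets $-\tfrac12 B(X,X)=\tfrac12(-B)(X,X)=\tfrac1{2x_k}$.

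It remains to convert the two quadratic terms into the symbols ${k \brack {ij}}$. Passing to the $(-B)$-orthonormal adapted basis $\{e_\alpha\}$, the $\langle\,,\,\rangle$-orthonormal basis is $\{e_\alpha/\sqrt{x_{i(\alpha)}}\}$, where $i(\alpha)$ denotes the index with $e_\alpha\in\mathfrak m_{i(\alpha)}$; substituting and writing $[e_\alpha,e_\beta]=\sum_\gamma A^\gamma_{\alpha\beta}e_\gamma$ turns each term into a weighted sum of squares $(A^\gamma_{\alpha\beta})^2$. The projections $[\,\cdot\,]_{\mathfrak m}$ restrict the output of each bracket to $\mathfrak m$, so only structure constants with all three indices in $\mathfrak m$ occur; grouping the summands according to which modules $\mathfrak m_i,\mathfrak m_j$ carry the indices, the term $\tfrac14\sum\langle[X_i,X_j]_{\mathfrak m},X\rangle^2$ collapses to $\frac1{4d_k}\sum_{i,j}\frac{x_k}{x_ix_j}{k \brack {ij}}$ and the term $-\tfrac12\sum\bigl|[X,X_i]_{\mathfrak m}\bigr|^2$ collapses to $-\frac1{2d_k}\sum_{i,j}\frac{x_j}{x_kx_i}{j \brack {ki}}$, after relabelling the summation indices and using that the symbols are symmetric under ${k \brack {ij}}={k \brack {ji}}={j \brack {ki}}$. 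Adding the three contributions gives \eqref{eq51}.

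The step I expect to require the most care is reconciling the $\mathfrak m$-projections with the Killing normalization: one must check that the components of the brackets lying in $\mathfrak k$ generate no surviving structure-constant sums beyond what is already packaged into the term $-\tfrac12 B(X,X)$. Concretely, were one to start instead from a version of the Ricci formula carrying explicit $\mathfrak k$-Casimir terms, the $\tfrac1{2x_k}$ would emerge only after invoking the Killing identity $\sum_{\beta,\gamma}(A^\gamma_{\alpha\beta})^2=1$ (summed over all of $\mathfrak g$, expressing that the Casimir operator of the adjoint representation is the identity), which collapses those terms; making this bookkeeping precise is the heart of the argument. As an independent check I would compute the scalar curvature $S=\sum_k d_k r_k=\tfrac12\sum_k\frac{d_k}{x_k}-\tfrac14\sum_{i,j,k}{k \brack {ij}}\frac{x_k}{x_ix_j}$ and verify the variational identity $r_k=-\frac{x_k}{d_k}\,\frac{\partial S}{\partial x_k}$, which follows from the first variation of the total scalar-curvature functional and reproduces \eqref{eq51} upon differentiation.
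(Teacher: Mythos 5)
Your derivation is correct: starting from Besse's formula $r(X,X)=-\tfrac12\sum_i|[X,X_i]_{\fr{m}}|^2-\tfrac12 B(X,X)+\tfrac14\sum_{i,j}\langle[X_i,X_j]_{\fr{m}},X\rangle^2$ (the $Z$-term vanishing by unimodularity), rescaling the adapted $(-B)$-orthonormal basis, and averaging over each irreducible block reproduces (\ref{eq51}) exactly, and your scalar-curvature/variational cross-check is also valid. The paper itself gives no proof of this lemma --- it is quoted from \cite{PS} --- and your argument is essentially the standard one used there (and in \cite{WZ}), so this is the same approach, correctly carried out.
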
 
If, for the decomposition (\ref{iso}) of $\frak m$,  the Ricci tensor $r$ of a $G$-invariant Riemannian metric $ \langle  \,\,\, , \,\,\, \rangle$ on $G/K$  satisfies $r({\frak m}_i, {\frak m}_j) =0$ for $ i \neq j$, 
then, by Lemma \ref{ric2}, it follows that $G$-invariant Einstein metrics on $M=G/K$ are exactly the positive real solutions $(x_1, \ldots, x_q)\in\mathbb{R}^{q}_{+}$  of the system of equations $\{r_1=\lambda, \, r_2=\lambda, \, \ldots, \, r_{q}=\lambda\}$, where $\lambda\in \mathbb{R}_{+}$ is the Einstein constant.

\section{A class of left-invariant metrics on $\SO(n)=\SO(k_1+k_2+\dots +k_p)$}

We will describe a decomposition of the tangent space of the Lie group $\SO (n)$ which will be convenient for our study.
 We consider the closed subgroup $K = \SO(k_1)\times\SO(k_2)\times\cdots\times\SO(k_p)$ of $G =\SO(n) =  \SO(k_1 + \cdots+ k_p)$
 with $k_1, k_2, \dots,  k_p \ge 3$,
 where the embedding of $K$ in $G$ is diagonal.
The homogeneous space $G/K$ obtained is known as real flag manifold, which 
for $p=3$ is an example of a generalized Wallach space.  
  Then the tangent space $\fr{so}(k_1 + \cdots+ k_p)$ of the orthogonal group $G = \SO(k_1 + \cdots + k_p)$ can be written as a direct sum of  $\Ad(K)$-invariant modules as 
  \begin{equation}\label{diaspasi}
\fr{so}(k_1 + \cdots + k_p) = \fr{so}(k_1)\oplus\fr{so}(k_2)\oplus\cdots\oplus\fr{so}(k_p)\oplus\fr{m}, 
\end{equation}
where $\fr{m}$ corresponds to the tangent space of $G/K$. 

For $i=1, \dots , p$,  we embed the Lie subalgebras $\fr{so}(k_i)$ in the Lie algebra
$\fr{so}(k_1+\cdots +k_p)$ diagonally.
   Then the tangent space $ \fr{m}$ of $G/K$ 
   is given by  $\fr{k}^{\perp} $ in $ \fr{g} = \fr{so}(k_1+ \cdots+k_p)$ with respect to the $\mbox{Ad}(G)$-invariant inner product $-B$. If we denote by $M(p,q)$ the set of all $p \times q$ real matrices, then we see that 
  $ \fr{m}$ is given by

\begin{equation*}
 \fr{m}=\left\{\left( 
 \begin{array}{ccccc}
  0 & A_{12} &  A_{13}  & \cdots & A_{1p} \\ 
  -A_{12}^t & 0 &  A_{23}  & \cdots & A_{2p}\\
   -A_{13}^t & -A_{23}^t &  0  & \cdots & A_{3p}\\
    \vdots & \vdots &  \vdots  & \ddots & \vdots\\
   -A_{1p}^t & -A_{2p}^t &  \cdots  & -A_{p-1,p}^t & 0
  \end{array}
  \right): A_{ij}\in M(k_i, k_j)
  \right\}.
 \end{equation*}

Now we set

\begin{equation*}
 \fr{m}_{ij}=\left\{\left( 
 \begin{array}{cccc}
  0 & \cdots &    & 0 \\ 
 \vdots & \ddots & A_{ij} & \vdots\\
   & -A_{ij}^t &  & \\
  0 & \cdots &  & 0 
  \end{array}
  \right): A_{ij}\in M(k_i, k_j)
  \right\}.
 \end{equation*}

 
 
 Note that the subspaces  $\fr{m}_{ij}$ ($1\le i<j\le p$)  are 
 irreducible $\Ad(K)$-sub\-modules  
whose dimensions  are $\dim\fr{m}_{ij} = k_{i}k_{j}$.
They are given as orthogonal complements, with respect to the negative of Killing form, of $\fr{so}(k_i)\oplus\fr{so}(k_j)$ in $\fr{so}(k_i+k_j)$ ($1\le i<j\le p$). 

Note that the irreducible submodules $\fr{m}_{ij}$ are mutually non equivalent, so any
 $G$-invariant metric on $G/K$ is determined by an $\Ad(K)$-invariant scalar product
$
\sum_{1\le i<j\le p}x_{ij} \,  (-B) |_{ \fr{m}_{ij}}
$, where $x_{ij}$ are positive real numbers.

For $i=1, \dots , p$ we also set $\fr{m}_i = \fr{so}(k_i)$. 
Therefore, decomposition (\ref{diaspasi}) of the tangent space of the orthogonal group $G= \SO(k_1+\cdots+k_p)$ takes the form
\begin{equation}\label{decom_so(n)}
 \fr{so}(k_1+\cdots +k_p) = \fr{m}_1 \oplus \fr{m}_2 \oplus \cdots \oplus\fr{m}_p \oplus  \bigoplus_{1\le i<j\le p}\fr{m}_{ij}. 
\end{equation}
Then we  see that the following relations hold:
\begin{lemma}\label{brackets}  The submodules in the decomposition {\em (\ref{decom_so(n)})}  satisfy the following bracket relations:
\begin{center}
\begin{tabular}{lll}
$[ \fr{m}_i, \fr{m}_i] = \fr{m}_i,$  &   $[ \fr{m}_i, \fr{m}_{ij}] = \fr{m}_{ij},$  & $[ \fr{m}_j, \fr{m}_{ij}] = \fr{m}_{ij},$ \\
$[ \fr{m}_{ij}, \fr{m}_{jk}] = \fr{m}_{ik},$   &   $[ \fr{m}_{ij}, \fr{m}_{ik}] = \fr{m}_{jk},$  & 
$[\fr{m}_{ik}, \fr{m}_{jk}] =  \fr{m}_{ij},$\\ 
$[ \fr{m}_{ij}, \fr{m}_{ij}] \subset \fr{m}_{i}+\fr{m}_j,$ &   &  
\end{tabular}
\end{center}  
for any $1\le i<j<k\le p$ and the other bracket relations are zero.
  \end{lemma}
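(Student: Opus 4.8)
The plan is to work with the explicit block-matrix realization of the modules set up above, and to reduce every bracket to a computation of the blocks of a matrix commutator $[X,Y]=XY-YX$. Writing $X\in\fr{so}(n)$ in block form $X=(X_{ab})_{1\le a,b\le p}$, where $X_{ab}$ is the $k_a\times k_b$ block and skew-symmetry reads $X_{ba}=-X_{ab}^{t}$, the module $\fr{m}_i$ consists of those $X$ whose only nonzero block is $X_{ii}\in\fr{so}(k_i)$, while $\fr{m}_{ij}$ ($i<j$) consists of those $X$ with only $X_{ij}=A$ and $X_{ji}=-A^{t}$ nonzero. The $(a,b)$ block of the commutator is $[X,Y]_{ab}=\sum_{c}\bigl(X_{ac}Y_{cb}-Y_{ac}X_{cb}\bigr)$, so a summand survives only when the block labels chain together through a shared middle index $c$. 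This single observation drives all the relations.

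First I would dispose of the vanishing brackets and the containments. If the index sets of two modules share no common block label, then in $[X,Y]_{ab}$ no middle index $c$ can connect a nonzero block of $X$ to a nonzero block of $Y$, so the commutator vanishes; this yields $[\fr{m}_i,\fr{m}_j]=0$, $[\fr{m}_i,\fr{m}_{jk}]=0$ for $i\notin\{j,k\}$, and $[\fr{m}_{ij},\fr{m}_{kl}]=0$ for $\{i,j\}\cap\{k,l\}=\emptyset$. For $X\in\fr{m}_{ij}$ and $Y\in\fr{m}_{jk}$ with $i,j,k$ distinct, the only surviving chain runs through $c=j$, producing $[X,Y]_{ik}=X_{ij}Y_{jk}=AB$ together with its skew transpose in the $(k,i)$ block, so $[\fr{m}_{ij},\fr{m}_{jk}]\subset\fr{m}_{ik}$; the brackets $[\fr{m}_{ij},\fr{m}_{ik}]$ and $[\fr{m}_{ik},\fr{m}_{jk}]$ are the same computation after relabeling, the shared block index being contracted and the two remaining labels naming the target module. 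For $X,Y\in\fr{m}_{ij}$ the chains instead close up at $c=j$ and $c=i$, landing in the diagonal blocks $(j,j)$ and $(i,i)$; one checks these blocks, e.g. $BA^{t}-AB^{t}$ in position $(i,i)$, are skew-symmetric, giving $[\fr{m}_{ij},\fr{m}_{ij}]\subset\fr{m}_i+\fr{m}_j$. Similarly $[\fr{m}_i,\fr{m}_{ij}]\subset\fr{m}_{ij}$ and $[\fr{m}_j,\fr{m}_{ij}]\subset\fr{m}_{ij}$, since multiplying by the single diagonal block preserves the off-diagonal pattern.

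The remaining work is to upgrade these inclusions to equalities, which is where the only real content lies. For $[\fr{m}_{ij},\fr{m}_{jk}]=\fr{m}_{ik}$ I would show that, as $A\in M(k_i,k_j)$ and $B\in M(k_j,k_k)$ vary, the products $AB$ span all of $M(k_i,k_k)$: taking matrix units $A=E_{\alpha\beta}$ and $B=E_{\beta\gamma}$ gives $AB=E_{\alpha\gamma}$, and these exhaust a basis of $M(k_i,k_k)$. The same matrix-unit trick handles $[\fr{m}_i,\fr{m}_{ij}]=\fr{m}_{ij}$: with $C=E_{\alpha\beta}-E_{\beta\alpha}\in\fr{so}(k_i)$ (the hypothesis $k_i\ge 3$ in particular supplies an index $\beta\ne\alpha$) and $A=E_{\beta\gamma}$ one gets $CA=E_{\alpha\gamma}$, so the products $CA$ span $M(k_i,k_j)$. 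Finally $[\fr{m}_i,\fr{m}_i]=\fr{m}_i$ is precisely the statement that $\fr{so}(k_i)$ is perfect, which holds because $\fr{so}(k_i)$ is semisimple for $k_i\ge 3$. I expect the surjectivity statements to be the main—if mild—obstacle: the block multiplications give the inclusions $\subset$ at once, but promoting each to an equality needs the explicit spanning arguments above, and the equality $[\fr{m}_i,\fr{m}_i]=\fr{m}_i$ genuinely uses $k_i\ge 3$, since it is exactly this that fails when $k_i=2$ and $\fr{so}(2)$ becomes abelian.
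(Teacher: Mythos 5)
Your proof is correct. The paper offers no proof of this lemma (it is stated as evident from the block-matrix description of the modules), and your argument---block-by-block computation of the commutator, the matrix-unit spanning trick to upgrade inclusions to equalities, and perfectness of $\fr{so}(k_i)$ for $k_i\ge 3$---is precisely the verification the paper implicitly relies on, including the correct observation that $[\fr{m}_i,\fr{m}_i]=\fr{m}_i$ is exactly what fails when $k_i=2$.
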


Then by taking into account the diffeomorphism
$$
G/\{e\}\cong (G\times \SO(k_1)\times\cdots\times\SO(k_p))/{\rm diag}(\SO(k_1)\times\cdots\times\SO(k_p))
$$ 
we consider left-invariant metrics on $G$ which are determined by the  
  $\Ad(\SO(k_1)\times\cdots\times\SO(k_p))$-invariant scalar products on $\fr{so}(k_1+\cdots+k_p)$ given by
 \begin{equation} \label{metric001} 
 \langle \  \ ,\ \ \rangle =  \sum_{i=1}^p x_i \, (-B) |_{\fr{so}(k_i)}+ 
  \sum_{1\le i<j\le p} x_{ij}\, (-B) |_{\fr{m}_{ij}},
\end{equation}
  where $x_i, x_{ij}>0$.
   Then we see that possible non zero symbols (up to permutation of indices) are 
 \begin{equation}\label{triplets}
 \displaystyle 
{i \brack {ii}}, \ \  {(ij) \brack {i(ij)}}, \ \   {(ij) \brack {j(ij)}},\ \   
    {(ik) \brack {(ij)(jk)}}.
\end{equation}

Denote by $d_i$ and $d_{ij}$ the dimensions of the modules $\fr{m}_i$ and $\fr{m}_{ij}$ respectively.
Then it is $d_i=k_i(k_i-1)/2$,  $d_{ij}=k_ik_j$.

\section{Naturally reductive metrics on the compact Lie group $\SO(n)$} 
We recall the main result of D'Atri and Ziller in \cite{DZ}, where they had investigated naturally reductive metrics among left-invariant metrics on compact Lie groups and gave a complete classification in the case of simple Lie groups.
Let $G$ be a compact, connected semisimple Lie group, $L$ a closed subgroup of $G$ and let
$\fr{g}$ be the Lie algebra of $G$ and $\fr{l}$ the subalgebra corresponding to $L$.
We denote by $Q$ the negative of the Killing form of $\fr{g}$.  Then $Q$ is an
$\Ad (G)$-invariant inner product on $\fr{g}$.
Let $\fr{m}$ be an orthogonal complement of $\fr{l}$ with respect to $Q$.  Then we have
$
\fr{g}=\fr{l}\oplus\fr{m}$, $\Ad(L)\fr{m}\subset\fr{m}$.
Let $\fr{l}=\fr{l}_0\oplus\fr{l}_1\oplus\cdots\oplus\fr{l}_p$ be a decomposition of $\fr{l}$ into ideals, where $\fr{l}_0$ is the center of $\fr{l}$ and $\fr{l}_i$ $(i=1,\dots , p)$ are simple ideals of $\fr{l}$.
Let $A_0|_{\fr{l}_0}$ be an arbitrary metric on $\fr{l}_0$.

\begin{theorem}\label{DZ}{\em(\cite{DZ}, Theorem 1, p. 92) } Under the notations above, a left-invariant metric on $G$ of the form
\begin{equation}\label{natural}
\langle\ ,\ \rangle =x\cdot Q|_{\fr{m}}+A_0|_{\fr{l}_0}+u_1\cdot Q|_{\fr{l}_1}+\cdots
+ u_p\cdot Q|_{\fr{l}_p}, \quad (x, u_1, \dots , u_p >0)
\end{equation}
is naturally reductive with respect to $G\times L$, where $G\times L$ acts on $G$ by
$(g, l)y=gyl^{-1}$.

Moreover, if a left-invariant metric $\langle\ ,\ \rangle$ on a compact simple Lie group
$G$ is naturally reductive, then there is a closed subgroup $L$ of $G$ and the metric
$\langle\ ,\ \rangle$ is given by the form {\em (\ref{natural})}.
\end{theorem}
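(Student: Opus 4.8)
The plan is to realize $G$ as the homogeneous space $\tilde G/\diag(L)$ for $\tilde G = G\times L$ acting by $(g,l)\cdot y = gyl^{-1}$, and then to produce, for the forward direction, an explicit reductive complement together with an inner product that satisfies the naturally reductive identity and induces exactly (\ref{natural}); the converse is the classification statement and will be treated separately. The action is transitive with isotropy at $e$ equal to $\diag(L)$, so $G\cong\tilde G/\diag(L)$, and at the Lie algebra level $\tilde{\fr g}=\fr g\oplus\fr l$ with isotropy $\diag(\fr l)=\{(Z,Z):Z\in\fr l\}$. The orbit map $(g,l)\mapsto gl^{-1}$ has differential $(X,Y)\mapsto X-Y$ at the identity, so any reductive complement $\fr p$ of $\diag(\fr l)$ is carried isomorphically onto $\fr g=\fr l\oplus\fr m$. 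Throughout I would use the criterion that a metric with reductive complement $\fr p$ is naturally reductive with respect to $\tilde G$ iff $\langle[X,Y]_{\fr p},Z\rangle+\langle Y,[X,Z]_{\fr p}\rangle=0$ for all $X,Y,Z\in\fr p$, where $[\,\cdot\,,\cdot\,]$ is the bracket of $\tilde{\fr g}$ and $[\,\cdot\,]_{\fr p}$ denotes projection along $\diag(\fr l)$.

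For the forward direction I would take the complement
\[
\fr p=(\fr m,0)\ \oplus\ \bigoplus_{i=0}^{p}\bigl\{(Z,\Phi_i Z):Z\in\fr l_i\bigr\},
\]
where on each simple ideal $\fr l_i$ ($i\ge1$) the map $\Phi_i=c_i\,\Id$ is a scalar with $c_i<1$, and on the center $\fr l_0$ the map $\Phi_0$ is a $Q$-symmetric endomorphism with $\Id-\Phi_0$ invertible. Since $\Ad(l)$ preserves $\fr m$ and each $\fr l_i$, this $\fr p$ is $\Ad(\diag(L))$-invariant. Equipping $\fr p$ with the block inner product that is $x\,Q$ on $(\fr m,0)$ and $\rho_i\,Q$ on the $i$-th simple graph, and transporting it by $(X,Y)\mapsto X-Y$, one reads off that the induced left-invariant metric on $G$ is $x\,Q|_{\fr m}+\sum_{i\ge1}\frac{\rho_i}{(1-c_i)^2}\,Q|_{\fr l_i}$ together with a quadratic form on $\fr l_0$, which is precisely the shape (\ref{natural}) with $u_i=\rho_i/(1-c_i)^2$.

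It then remains to check the naturally reductive identity by splitting $X,Y,Z$ over the blocks. Using $[\fr l_i,\fr l_j]=0$ for $i\ne j$, $[\fr l,\fr m]\subset\fr m$, centrality of $\fr l_0$, and the $\Ad$-invariance of $Q$ (skew-symmetry of $\ad$), every configuration collapses to $0$ automatically except the mixed case with two $\fr m$-arguments and one $\fr l_i$-argument; there the two terms combine to $\bigl(x-\rho_i/(1-c_i)\bigr)\,Q\bigl(\cdot,[\cdot,\cdot]\bigr)$, so the identity holds precisely when $\rho_i=x(1-c_i)$. Substituting this gives $u_i=x/(1-c_i)$, and as $c_i$ ranges over $(-\infty,1)$ this attains every value $u_i>0$, with $\rho_i=x^2/u_i>0$ keeping the block metric positive definite. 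For the center the analogous identity forces the inner product on the $\fr l_0$-block to equal $x\,Q\circ(\Id-\Phi_0)$; given any positive-definite $A_0$ this is solved by the unique $Q$-symmetric $\Phi_0$ with $\Id-\Phi_0=x\,A_0^{-1}Q$, which again leaves the block metric positive definite. This establishes the first assertion for arbitrary $x,u_1,\dots,u_p>0$ and arbitrary $A_0$.

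For the converse one must show that an \emph{arbitrary} naturally reductive left-invariant metric on a compact simple $G$ necessarily arises this way. The approach I would take is to write $\langle X,Y\rangle=Q(\varphi X,Y)$ for a $Q$-symmetric positive operator $\varphi$, invoke the description of the connected isometry group of a left-invariant metric on a compact simple Lie group to exhibit it in the form $L(G)\cdot R(L)$ for a closed subgroup $L$ whose Lie algebra records the eigenstructure of $\varphi$, and then argue that natural reductivity forces a transitive isometry subgroup of product type $\tilde G=G\times L$; feeding this back into the naturally reductive identity pins $\varphi$ to be scalar on $\fr m=\fr l^{\perp}$ and to respect the ideal decomposition of $\fr l$, which is exactly (\ref{natural}). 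The \textbf{main obstacle} is precisely this converse: it rests on the nontrivial determination of the isometry group of left-invariant metrics on compact simple Lie groups and on controlling how the reductive complement is determined by $\varphi$, whereas the forward construction above is, by contrast, a finite bookkeeping of the naturally reductive identity over the block decomposition.
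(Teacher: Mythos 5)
This statement is not proved in the paper at all: it is D'Atri--Ziller's Theorem~1, quoted verbatim from \cite{DZ} and used as a black box (its only role is in the proof of Proposition~\ref{prop5.1}). So there is no internal proof to compare against, and your attempt has to be judged as a standalone proof of the quoted theorem. Your argument for the first assertion (sufficiency) is correct, and it is in fact the classical construction: realize $G=(G\times L)/\diag(L)$, take the $\Ad(\diag L)$-invariant complement built from $(\fr m,0)$ and the graphs of $c_i\Id$ on the simple ideals (resp.\ of a $Q$-symmetric $\Phi_0$ on the center), and check the naturally reductive identity blockwise. I verified the bookkeeping: every configuration does vanish automatically except the one with two $\fr m$-entries and one graph entry, which produces exactly the constraint $\rho_i=x(1-c_i)$, whence $u_i=\rho_i/(1-c_i)^2=x/(1-c_i)$ sweeps out all of $(0,\infty)$ as $c_i$ ranges over $(-\infty,1)$, and the central block is handled by $\Id-\Phi_0=x\hat A_0^{-1}$ with $\hat A_0$ the $Q$-symmetric operator of $A_0$. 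One small caveat you should state: the $\Ad(\diag L)$-invariance of $\fr p$ and of the block inner product uses that $\Ad(L)$ preserves each ideal $\fr l_i$ and acts trivially on $\fr l_0$, which is automatic for $L$ connected but needs a remark otherwise.

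The genuine gap is the ``Moreover'' half, which is part of the statement and which your proposal does not prove. What you offer there is a strategy outline --- write $\langle X,Y\rangle=Q(\varphi X,Y)$, invoke the determination of the full isometry group of a left-invariant metric on a compact simple Lie group as sitting inside $L(G)\cdot R(G)$ (the Ochiai--Takahashi theorem), reduce an arbitrary transitive isometric action realizing natural reductivity to one of product type $G\times L$, and then show the identity forces $\varphi$ to be scalar on $\fr l^{\perp}$ and to respect the ideal decomposition of $\fr l$ --- and you yourself flag this as the main obstacle. Those named ingredients are precisely the difficult content of \cite{DZ}: nothing in the sketch explains why the transitive group may be taken of product form, how the reductive complement is pinned down by $\varphi$, or why the eigenvalue analysis closes. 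Naming the tools is not carrying out the proof, so as a proof of the full theorem the proposal is incomplete: it establishes sufficiency only, and the classification direction still rests on the reference. Relative to the paper's own logic this is harmless (the paper also just cites \cite{DZ}), but relative to the statement as posed it is a real gap.
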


\medskip
For the Lie group  $\SO(n)$ we consider  $\Ad(\SO(k_1)\times\cdots\times\SO(k_p))$-invariant metrics of the form (\ref{metric001}). 
Recall that $K = \SO(k_1)\times\cdots\times\SO(k_p)$ with Lie algebra $\fr{k}$.

We   consider 
the case when  $k_1 \ge 3$ and $ k_2=\cdots =k_p =k\ge 3$, so that $n=k_1+k(p-1)$, and 
 the left-invariant metric $ \langle \  \ ,\ \ \rangle $ on $\SO(n)$ of the form {(\ref{metric001})}  with  
 $$ \ x_2=\cdots =x_p,\  x_{12} = \cdots =x_{1p},\  x_{23}=x_{ij}\ \ \mbox{for}\ 2\le i<j\le p, $$  
 that is, 
  \begin{equation} \label{metric009} 
 \langle \  \ ,\ \ \rangle = x_1 \, (-B) |_{\fr{so}(k_1)} + x_2 \sum_{i=2}^p  (-B) |_{\fr{so}(k_i)}+  x_{12}\sum_{2\le j\le p}  (-B) |_{\fr{m}_{1j}} + x_{23} \sum_{2\le i<j\le p} (-B) |_{\fr{m}_{ij}}. 
\end{equation}

We need the following result  to prove Theorem \ref{thm6.2} in Section 6. 
\begin{prop}\label{prop5.1}
If a left invariant metric $\langle\ \ ,\ \ \rangle$ of the form {\em (\ref{metric009})} on $\SO(n)$ is naturally reductive  with respect to $\SO(n)\times L$ for some closed subgroup $L$ of $\SO(n)$, 
then  one of  the following holds: 

{\em  1)} the metric $\langle\ \ ,\ \ \rangle$ is bi-invariant. 

{\em  2)} $x_2=x_{23}$, 
 
 \,   {\em  3)}   $x_{12}=x_{23}$. 

Conversely, 
 if   one of the conditions  
 {\em 1)},   {\em 2)},  {\em 3)}, 
 is satisfied, then the metric 
 $\langle\,\, , \,\, \rangle$ of the form {\em (\ref{metric009})}  is  naturally reductive  with respect to $\SO(n)\times L$, for some closed subgroup $L$ of $\SO(n)$.
  \end{prop}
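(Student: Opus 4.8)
```latex
The plan is to reduce the classification to an application of D'Atri--Ziller's Theorem \ref{DZ}. The metric (\ref{metric009}) involves four distinct parameters $x_1, x_2, x_{12}, x_{23}$, obtained from the general form (\ref{metric001}) by imposing the symmetry assumptions dictated by permuting the last $p-1$ blocks of equal size $k$. The key structural observation is that these symmetry assumptions correspond exactly to the choice of a larger subgroup: one should identify a closed subgroup $L\subset\SO(n)$ whose isotropy decomposition groups the summands of (\ref{decom_so(n)}) into the blocks seen in (\ref{metric009}). The natural candidate is $L=\SO(k_1)\times\SO((p-1)k)$, whose Lie algebra is $\fr{l}=\fr{so}(k_1)\oplus\fr{so}((p-1)k)$, decomposing into the center (trivial here, since both factors are simple for $k_1,(p-1)k\ge 3$) and the two simple ideals $\fr{l}_1=\fr{so}(k_1)$, $\fr{l}_2=\fr{so}((p-1)k)$.

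First I would write down, for this choice of $L$, the $Q$-orthogonal complement $\fr{m}=\fr{l}^\perp$ inside $\fr{so}(n)$, which in terms of the decomposition (\ref{decom_so(n)}) is precisely $\bigoplus_{2\le j\le p}\fr{m}_{1j}$. Then the form (\ref{natural}) of Theorem \ref{DZ} specializes to a metric that is $x_{12}$ on $\fr{m}=\bigoplus_j\fr{m}_{1j}$, some $u_1$ on $\fr{so}(k_1)$, and some $u_2$ on $\fr{so}((p-1)k)$, where the last factor $\fr{so}((p-1)k)$ itself contains $\bigoplus_{i=2}^p\fr{so}(k_i)\oplus\bigoplus_{2\le i<j\le p}\fr{m}_{ij}$. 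The crucial point is that a multiple $u_2\cdot Q|_{\fr{so}((p-1)k)}$ is constant across \emph{all} these summands, forcing $x_2=x_{23}=u_2$. This yields case 2). Interchanging the roles by choosing $L$ so that the $\fr{so}(k_i)$ and the $\fr{m}_{ij}$ ($2\le i<j\le p$) blocks are grouped with $\fr{m}_{1j}$ instead produces case 3), via a subgroup such as a suitably embedded $\SO(k_1+k)$-type factor or by symmetry of the construction; the bi-invariant case 1) is the degenerate situation where all parameters coincide. I would verify each candidate $L$ gives, through (\ref{natural}), exactly a metric of the form (\ref{metric009}) satisfying the stated constraint, which establishes the converse direction simultaneously.

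For the forward direction I would argue that these are the \emph{only} possibilities. Suppose (\ref{metric009}) is naturally reductive with respect to $\SO(n)\times L$. By the second half of Theorem \ref{DZ} the metric must arise from the form (\ref{natural}) for some closed $L$, so the parameter pattern of (\ref{metric009}) must be realizable as ``$x$ on $\fr{m}=\fr{l}^\perp$, and block-constant multiples of $Q$ on the ideals of $\fr{l}$.'' The combinatorial constraint is that $\fr{l}$ must be an ideal-decomposable subalgebra containing the summands on which the metric is \emph{not} equal to the common value $x$ on $\fr{l}^\perp$. Using the bracket relations of Lemma \ref{brackets}, I would show that any such $\fr{l}$ respecting the $\Ad(K)$-module structure and the symmetry of (\ref{metric009}) must merge the $x_2$-summands with the $x_{23}$-summands (giving $x_2=x_{23}$), or merge the $x_{12}$-summands with the $x_{23}$-summands (giving $x_{12}=x_{23}$), or collapse everything (the bi-invariant case). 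The bracket relations $[\fr{m}_{ij},\fr{m}_{ik}]=\fr{m}_{jk}$ and $[\fr{m}_{ij},\fr{m}_{jk}]=\fr{m}_{ik}$ are what tie the off-diagonal blocks to the diagonal $\fr{so}(k_i)$ blocks and prevent more refined groupings.

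The hard part will be the forward direction: ruling out exotic subgroups $L$ that might a priori realize (\ref{metric009}) without forcing one of the three coincidences. Theorem \ref{DZ} guarantees \emph{some} $L$ exists but does not hand us its structure, so I must analyze which subalgebras $\fr{l}\subset\fr{so}(n)$ are compatible with both the prescribed parameter values and the requirement that $\fr{l}$ split into ideals with $\fr{l}^\perp$ carrying a single scalar. The cleanest route is to use that on $\fr{l}^\perp$ the metric equals $x\cdot Q$ for a single $x$: whichever summands of (\ref{decom_so(n)}) carry a parameter different from the others cannot all lie in $\fr{l}^\perp$ unless they share that common value, and the bracket-closure conditions of Lemma \ref{brackets} then constrain the admissible ideal decompositions tightly enough to yield exactly the three listed cases. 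I expect the argument to hinge on showing that the $\fr{m}_{ij}$ with $i,j\ge 2$ cannot be separated from the $\fr{so}(k_i)$ ($i\ge 2$) inside any ideal-decomposable $\fr{l}$ without either equating their parameters or forcing the bi-invariant metric.
```
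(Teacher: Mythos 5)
Your converse direction is half right: for case 2) your subgroup $L=\SO(k_1)\times\SO((p-1)k)$ is exactly the right choice, and your identification of $\fr{l}^{\perp}=\bigoplus_{2\le j\le p}\fr{m}_{1j}$ with the $x_{12}$-block is correct. But your candidate for case 3) is wrong. The subgroup realizing $x_{12}=x_{23}$ is $L=K=\SO(k_1)\times(\SO(k))^{p-1}$ itself: then $\fr{l}^{\perp}=\bigoplus_{1\le i<j\le p}\fr{m}_{ij}$ carries the single scalar $x$ of Theorem \ref{DZ}, while $x_1$ and $x_2$ survive as the independent scalars $u_i$ on the simple ideals $\fr{so}(k_1),\fr{so}(k_2),\dots,\fr{so}(k_p)$ of $\fr{l}$. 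A ``suitably embedded $\SO(k_1+k)$-type factor'' does the opposite of what you want: if $\fr{so}(k_1+k)=\fr{so}(k_1)\oplus\fr{so}(k_2)\oplus\fr{m}_{12}$ were an ideal of $\fr{l}$, Theorem \ref{DZ} would put one scalar on all of it, forcing $x_1=x_2=x_{12}$, which is not condition 3).

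The forward direction, which you yourself flag as the hard part, is left as a plan, and the plan is missing the two ideas that make the paper's proof work. First, the organizing dichotomy: either $\fr{l}\subset\fr{k}$ or $\fr{l}\not\subset\fr{k}$. In the first case no bracket combinatorics is needed at all: $\fr{l}^{\perp}\supset\fr{k}^{\perp}=\bigoplus_{1\le i<j\le p}\fr{m}_{ij}$, so the single scalar on $\fr{l}^{\perp}$ already gives $x_{12}=x_{23}$. In the second case the paper passes to the subalgebra $\fr{h}$ generated by $\fr{l}$ and $\fr{k}$; since $\fr{h}\supset\fr{k}$ is $\ad(\fr{k})$-stable and the $\fr{m}_{ij}$ are irreducible and mutually inequivalent $\Ad(K)$-modules, $\fr{h}$ must contain some whole block $\fr{m}_{ij}$ --- this is the step that converts ``$\fr{l}$ sticks out of $\fr{k}$'' into module-wise information, and your sketch has no substitute for it. Second, the operative bracket fact is not the mixed relation $[\fr{m}_{ij},\fr{m}_{ik}]=\fr{m}_{jk}$ that you single out, but the symmetric-pair relation $[\fr{m}_{ij},\fr{m}_{ij}]=\fr{m}_i\oplus\fr{m}_j$: it makes $\fr{m}_i\oplus\fr{m}_j\oplus\fr{m}_{ij}=\fr{so}(k_i+k_j)$ a simple subalgebra, on which a naturally reductive metric must restrict to a single multiple of $Q$. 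This is what turns ``$\fr{h}$ contains $\fr{m}_{ij}$'' into the identity $x_2=x_{23}$ when $2\le i<j$, respectively $x_1=x_2=x_{12}$ when $i=1$; and in the latter case one still needs the additional fact, nowhere in Lemma \ref{brackets}, that $\fr{m}_{1k}\oplus\fr{m}_{jk}$ is irreducible as an $\Ad(\SO(k_1+k_j))$-module, in order to force $x_{12}=x_{23}$ as well and conclude bi-invariance. Your closing assertion that the bracket-closure conditions ``constrain the admissible ideal decompositions tightly enough to yield exactly the three listed cases'' is precisely the content of the proposition, not an argument for it; without the dichotomy, the passage to $\fr{h}$, and the symmetric-pair and irreducibility facts above, the forward direction remains unproved.
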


\begin{proof}   Let ${\frak l}$ be the Lie algebra of  $L$. Then we have that either ${\frak l} \subset {\frak k}$  or ${\frak l} \not\subset {\frak k}$. 
First we consider the case of  ${\frak l} \not\subset {\frak k}$. Let ${\frak h}$ be the subalgebra of ${\frak g}$ generated by ${\frak l}$ and ${\frak k}$. 
Since 
$\fr{so}(k_1+\cdots+k_p) = \fr{m}_1\oplus \cdots \oplus\fr{m}_p\oplus  \fr{m}_{12}\oplus\fr{m}_{13}\oplus  \cdots\oplus  \fr{m}_{p-1, p}$ is an irreducible decomposition as $\mbox{Ad}(K)$-modules, we see that the Lie algebra $\frak h$  contains  at least one of  $\fr{m}_{ij}$ $(1\le i<j\le p)$. 

If $\frak h$  contains  one of ${\frak m}_{1j}$ ($2 \leq j$), then  
$\big[{\frak m}_{1j}, {\frak m}_{1j}\big] \subset {\frak m}_1 \oplus \fr{m}_j$ and $\fr{m}_1 \oplus {\frak m}_j  \oplus {\frak m}_{1j}$ is the subalgebra $\fr{so}(k_1+k_j)$. Thus, we see that $x_1 = x_2 =x_{12}$. Note also that  ${\frak m}_{1k} \oplus \fr{m}_{jk}$ ($k \neq 1, j$) is irreducible as $\mbox{Ad}(\SO(k_1+k_j))$-module.  Thus we see  that $x_{12} = x_{23}$ and hence   the metric 
 $\langle\,\, , \,\, \rangle$ is bi-invariant in this case. 

If $\frak h$  contains  one of ${\frak m}_{ij}$ ($2 \leq i < j$), then $\big[{\frak m}_{ij}, {\frak m}_{ij}\big] \subset {\frak m}_i \oplus \fr{m}_j$ and $\fr{m}_i \oplus {\frak m}_j  \oplus {\frak m}_{ij}$ is the subalgebra $\fr{so}(k_i+k_j)$. Thus we see that $x_2  =x_{23}$. 

Now we consider the case ${\frak l} \subset {\frak k}$.  Since the  orthogonal complement
 ${\frak l}^{\bot}$ of ${\frak l}$ with respect to $-B$ contains the  orthogonal complement 
${\frak k}^{\bot}$ of ${\frak k}$, we see that ${\frak l}^{\bot} \supset \displaystyle{\oplus_{1\le i<j\le p}{\frak m}_{ij}}$.   
Since the  invariant metric $\langle \,\, , \,\, \rangle$ is naturally reductive  with respect to $G\times L$,  
 it follows that  $x_{12}=x_{23}$ by  Theorem \ref{DZ}.   
The converse is a direct consequence of Theorem \ref{DZ}.
\end{proof}

\section{The Ricci tensor for a class of left-invariant  metrics on $\SO(n)=\SO(k_1+\dots +k_p)$} 

We will compute the Ricci tensor for the left-invariant metrics on $\SO(n)=\SO(k_1+\cdots+k_p)$, determined by the
$\Ad(K) =\Ad( \SO(k_1)\times\cdots\times\SO(k_p))$-invariant scalar products of the form 
(\ref{metric001}). Note that the Ricci tensor $r$ of the metric (\ref{metric001}) is also  
$\Ad( K )$-invariant.  

For a moment we write the $\Ad( K )$-invariant irreducible  decomposition (\ref{decom_so(n)}) as
\begin{equation*}
\fr{so}(n) = \fr{so}(k_1+\cdots+k_p) = \fr{w}_1 \oplus \fr{w}_2 \oplus \cdots \oplus  \fr{w}_{\frac{p(p+1)}{2}},
\end{equation*}
where $ \fr{w}_i = {\fr m}_i$ for $i = 1, 2, \dots ,p$ and $ {\fr w}_{p+1}, \dots ,\fr{w}_{\frac{p(p+1)}{2}}$ are the modules $\fr{m}_{ij}$ $(1\le i<j\le p)$.
 Then  by Lemma \ref{brackets} it is easy to see the following: 

\begin{lemma}\label{inv_tensor}  
For an $\Ad( K )$-invariant symmetric {\em 2}-tensor $\rho$ on $ \fr{so}(k_1+\cdots+k_p)$,  
we have $\rho(\fr{w}_i, \fr{w}_j) = (0) $ for $i \neq j$. In particular, for the Ricci tensor $r$ of the metric {\em (\ref{metric001})}, we have $r(\fr{w}_i, \fr{w}_j) = (0) $ for $i \neq j$.
\end{lemma}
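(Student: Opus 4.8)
The plan is to exploit the infinitesimal form of $\Ad(K)$-invariance together with the bracket relations of Lemma~\ref{brackets}. Writing $\rho(\Ad(h)X,\Ad(h)Y)=\rho(X,Y)$ for $h\in K$ and differentiating at the identity in a direction $Z\in\fr{k}$ yields the cocycle identity
\begin{equation*}
\rho([Z,X],Y)+\rho(X,[Z,Y])=0\qquad(Z\in\fr{k},\ X,Y\in\fr{so}(n)).
\end{equation*}
I would establish this first, observing that each $\fr{m}_i=\fr{so}(k_i)$ lies inside $\fr{k}$, so the directions $Z$ needed below are available within the very summands appearing in the decomposition (\ref{decom_so(n)}).

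The core of the argument is then a uniform \emph{witness} step. Given two distinct summands $\fr{w}_a,\fr{w}_b$ in (\ref{decom_so(n)}), I would single out one of the subalgebras $\fr{m}_c=\fr{so}(k_c)\subset\fr{k}$ enjoying the two properties
\begin{equation*}
[\fr{m}_c,\fr{w}_b]=\fr{w}_b\qquad\text{and}\qquad[\fr{m}_c,\fr{w}_a]=(0),
\end{equation*}
both read off directly from Lemma~\ref{brackets}. Once such a $c$ is found, every $Y\in\fr{w}_b$ is a sum of brackets $[Z,Y']$ with $Z\in\fr{m}_c$ and $Y'\in\fr{w}_b$, whence for $X\in\fr{w}_a$ the cocycle identity gives
\begin{equation*}
\rho(X,[Z,Y'])=-\rho([Z,X],Y')=0,
\end{equation*}
because $[Z,X]\in[\fr{m}_c,\fr{w}_a]=(0)$. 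By linearity $\rho(\fr{w}_a,\fr{w}_b)=(0)$, which is the assertion.

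Producing the witness $c$ is a brief case check against Lemma~\ref{brackets} (using $k_i\ge 3$, so that $[\fr{m}_i,\fr{m}_i]=\fr{m}_i$). If $\{\fr{w}_a,\fr{w}_b\}=\{\fr{m}_i,\fr{m}_j\}$ with $i\neq j$, take $c=j$, since $[\fr{m}_j,\fr{m}_j]=\fr{m}_j$ and $[\fr{m}_j,\fr{m}_i]=(0)$. If $\fr{w}_a=\fr{m}_i$ and $\fr{w}_b=\fr{m}_{kl}$, take $c$ to be an index of $\{k,l\}$ different from $i$ (such an index always exists), so that $[\fr{m}_c,\fr{m}_{kl}]=\fr{m}_{kl}$ while $[\fr{m}_c,\fr{m}_i]=(0)$ as $c\neq i$. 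If $\fr{w}_a=\fr{m}_{ij}$ and $\fr{w}_b=\fr{m}_{kl}$ with $\{i,j\}\neq\{k,l\}$, take $c\in\{k,l\}\setminus\{i,j\}$, which is nonempty because the distinct two-element sets $\{i,j\}$ and $\{k,l\}$ cannot satisfy $\{k,l\}\subseteq\{i,j\}$; then $[\fr{m}_c,\fr{m}_{kl}]=\fr{m}_{kl}$ and $[\fr{m}_c,\fr{m}_{ij}]=(0)$.

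I expect the only delicate point to be the shared-index cases, such as $\rho(\fr{m}_i,\fr{m}_{il})$, where bracketing by $\fr{m}_i$ itself is useless (it merely relates the block to itself via $[\fr{m}_i,\fr{m}_{il}]=\fr{m}_{il}$); the resolution is exactly to generate $\fr{m}_{il}$ through the \emph{other} factor $\fr{m}_l$, for which $[\fr{m}_l,\fr{m}_i]=(0)$. Conceptually the whole statement is Schur's lemma for the mutually inequivalent irreducible summands $\fr{w}_i$, but the bracket argument makes the inequivalence concrete and avoids invoking the representation theory of the individual $\SO(k_i)$. Finally, the ``in particular'' clause follows at once: the metric (\ref{metric001}) is $\Ad(K)$-invariant, so each $\Ad(k)$ ($k\in K$) is an automorphism of $\fr{so}(n)$ preserving that inner product and hence preserving its Ricci tensor, making $r$ an $\Ad(K)$-invariant symmetric $2$-tensor to which the first part applies.
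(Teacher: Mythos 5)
Your proof is correct and takes exactly the route the paper intends: the paper gives no written argument beyond the assertion that the lemma follows from Lemma~\ref{brackets}, and your infinitesimal-invariance (cocycle) identity combined with the ``witness'' subalgebra $\fr{m}_c\subset\fr{k}$ that generates $\fr{w}_b$ while annihilating $\fr{w}_a$ is the natural elaboration of that assertion. Your case analysis is complete (including the delicate shared-index case $\rho(\fr{m}_i,\fr{m}_{il})$, handled via the other factor $\fr{m}_l$), and every bracket fact you invoke is contained in Lemma~\ref{brackets}.
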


By Lemma \ref{ric2} and by taking into account (\ref{triplets}) we obtain the following:
\begin{prop}\label{lemma5.1}
The components  of  the Ricci tensor ${r}$ for the left-invariant metric $ \langle \  \ ,\ \ \rangle $ on $\SO(n)$ defined by  \em{(\ref{metric001})} are given as follows:  
\begin{equation*}\label{eq13}
\begin{array}{lll} 
r_i &= & \displaystyle{\frac{1}{2 x_i} +
\frac{1}{4 d_i } \biggl({i \brack {ii}}\frac{1}{x_{i}} +\sum_{j=1, j\ne i}^p{i \brack {(ij)(ij)}}  \frac{x_i}{{x_{ij}}^2}\biggr) 
- \frac{1}{2 d_i } \biggl({i \brack {ii}}\frac{1}{x_{i}} + \sum_{j=1, j\ne i}^p {(ij) \brack {i(ij)}}  \frac{1}{{x_{i}}}\biggr)\ }
\\ & &( i=1, \dots , p ),\\  
r_{ij} &= &  \displaystyle{\frac{1}{ 2 x_{ij}} +\frac{1}{2 d_{ij}} \sum_{k=1, k\ne i, j}^p{(ij) \brack {(ik) (jk)}}    \biggl(\frac{x_{ij}}{x_{ik} x_{jk} } -  \frac{x_{ik}}{x_{ij}x_{jk}} } - \frac{x_{jk}}{x_{ij}x_{ik}}  \biggl) 
\\ &&
\\  
& & 
\displaystyle{-\frac{1}{ 2  d_{ij}} \biggl( {i \brack {(ij) (ij)}} \frac{x_i}{{x_{ij}}^2} 
+ {j \brack {(ij) (ij)}}\frac{x_j}{{x_{ij}}^2} \biggl)}\\
\\ & &( 1\leq i < j \leq  p ),\\  
    \end{array} 
\end{equation*}
where $n = k_1+\cdots +k_p$. 
\end{prop}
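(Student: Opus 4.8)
The plan is to read off the Ricci components directly from the general formula (\ref{eq51}) of Lemma \ref{ric2}, using the list (\ref{triplets}) of the only possibly nonzero structure symbols. First, Lemma \ref{inv_tensor} guarantees that $r(\fr{w}_i,\fr{w}_j)=(0)$ for $i\ne j$, so the Ricci tensor of (\ref{metric001}) is diagonal with respect to the decomposition (\ref{decom_so(n)}); hence (\ref{eq51}) applies verbatim, with the index set consisting of the $p(p+1)/2$ modules $\fr{m}_i$ and $\fr{m}_{ij}$. The one auxiliary fact I would record at the outset is that each symbol ${k \brack {ij}}$ is totally symmetric in its three indices: the relation ${k \brack {ij}}={k \brack {ji}}={j \brack {ki}}$ noted after (\ref{eq3}) gives invariance under transposing the two lower indices and under swapping the upper index with a lower one, and these generate the full symmetric group. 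In particular ${(ij) \brack {i(ij)}}={i \brack {(ij)(ij)}}$, and ${(ik) \brack {(ij)(jk)}}$ is unchanged under any permutation of the three edges $(ij),(ik),(jk)$.

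For the blocks $\fr{m}_i=\fr{so}(k_i)$ I would specialize (\ref{eq51}) with $k=i$ and scan (\ref{triplets}) for the admissible triples. In the positive (quartic) sum $\sum_{j',i'}\frac{x_i}{x_{j'}x_{i'}}{i \brack {j'i'}}$ the only survivors have $(j',i')=(i,i)$, producing ${i \brack {ii}}\frac{1}{x_i}$, and $(j',i')=\big((ij),(ij)\big)$ for each $j\ne i$, producing ${i \brack {(ij)(ij)}}\frac{x_i}{x_{ij}^2}$. In the subtracted sum $\sum_{j',i'}\frac{x_{j'}}{x_i x_{i'}}{j' \brack {ii'}}$ the survivors are again $(j',i')=(i,i)$ and $(j',i')=\big((ij),(ij)\big)$, yielding ${i \brack {ii}}\frac{1}{x_i}$ and ${(ij) \brack {i(ij)}}\frac{1}{x_i}$ respectively. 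Collecting these against the prefactors $\frac{1}{4d_i}$ and $\frac{1}{2d_i}$ reproduces the stated expression for $r_i$.

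For the blocks $\fr{m}_{ij}$ I would set $k=(ij)$ and proceed analogously, but now the bookkeeping of ordered pairs $(j',i')$ is the delicate point. In the positive sum the triangle symbol ${(ij) \brack {(ik)(jk)}}$ is realized by the two orderings $\big((ik),(jk)\big)$ and $\big((jk),(ik)\big)$, and ${(ij) \brack {i(ij)}}$, ${(ij) \brack {j(ij)}}$ are each realized by two orderings as well; the resulting factor $2$ offsets the $\frac14$-versus-$\frac12$ discrepancy between the two prefactors. In the subtracted sum the fixed edge $(ij)$ sits in a lower slot next to either a single module (giving ${i \brack {(ij)(ij)}}\frac{x_i}{x_{ij}^2}$, ${j \brack {(ij)(ij)}}\frac{x_j}{x_{ij}^2}$, together with two further terms ${(ij) \brack {i(ij)}}\frac{1}{x_i}$, ${(ij) \brack {j(ij)}}\frac{1}{x_j}$) or next to another edge (giving $\frac{x_{ik}}{x_{ij}x_{jk}}$ and $\frac{x_{jk}}{x_{ij}x_{ik}}$ against ${(ij) \brack {(ik)(jk)}}$). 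The crucial simplification is that the two terms ${(ij) \brack {i(ij)}}\frac{1}{x_i}$ and ${(ij) \brack {j(ij)}}\frac{1}{x_j}$ coming from the subtracted sum cancel exactly their counterparts from the positive sum, leaving only the triangle combination $\frac{x_{ij}}{x_{ik}x_{jk}}-\frac{x_{ik}}{x_{ij}x_{jk}}-\frac{x_{jk}}{x_{ij}x_{ik}}$ and the two terms in $\frac{x_i}{x_{ij}^2}$, $\frac{x_j}{x_{ij}^2}$ displayed in the proposition. Here the three-term sign pattern is forced by the total symmetry of ${(ij) \brack {(ik)(jk)}}$: the positive sum places $(ij)$ on top (the $+$ term), while the subtracted sum places $(ij)$ below, paired with $(ik)$ or $(jk)$ on top (the two $-$ terms).

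I expect the main obstacle to be precisely this multiplicity accounting in the $r_{ij}$ computation — keeping straight how many ordered pairs realize each unordered symbol, matching them against the $\frac{1}{4d_{ij}}$ and $\frac{1}{2d_{ij}}$ weights, and verifying the cancellation of the $\frac{1}{x_i}$- and $\frac{1}{x_j}$-type terms. Everything else is a direct substitution once the total symmetry of the symbols and the bracket relations of Lemma \ref{brackets} are in hand.
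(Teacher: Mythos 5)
Your proposal is correct and is essentially the paper's own argument: the paper obtains Proposition \ref{lemma5.1} precisely by substituting the list (\ref{triplets}) of nonzero symbols into the Park--Sakane formula of Lemma \ref{ric2}, using the bracket relations of Lemma \ref{brackets} and the full symmetry of the symbols. The multiplicity bookkeeping you spell out (the factor $2$ from the two orderings in the positive sum of $r_{ij}$, and the resulting cancellation of the $1/x_i$- and $1/x_j$-type terms against those from the subtracted sum) is exactly the computation the paper leaves implicit.
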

%
%
We recall the following result  by the first author, V.V. Dzhepko and Yu.G. Nikonorov: 

\begin{lemma}\label{lemma5.20} {\em \! (\cite{ADN1})} For $a, b, c = 1, 2, \dots p$ and $(a - b)(b - c) (c - a) \neq 0$ the following relations hold: 
\begin{equation*}\label{eq14}
\begin{array}{ll} 
 \displaystyle{{a \brack {a a}} = \frac{k_a (k_a -1)(k_a -2)}{2(n -2)} },   &  \displaystyle{{a \brack {(a b) (a b)}} = \frac{k_a  k_b (k_a -1)}{2(n -2)} }, \vspace{3pt}\\ \displaystyle{{b \brack {(a b) (a b)}} = \frac{k_a  k_b (k_b -1)}{2(n -2)} }, & \displaystyle{{(a c) \brack {(a b ) (b c)}} = \frac{k_a  k_b  k_c}{2(n -2)}}.
\end{array} 
\end{equation*}
\end{lemma}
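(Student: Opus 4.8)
The plan is to verify all four identities directly from the definition $A^{\gamma}_{\alpha\beta}=-B([e_\alpha,e_\beta],e_\gamma)$, using the standard basis of $\fr{so}(n)$ adapted to the block structure. Write $\{1,\dots,n\}=I_1\sqcup\cdots\sqcup I_p$ with $|I_a|=k_a$, and let $F_{ij}=E_{ij}-E_{ji}$ ($i<j$) be the usual basis of $\fr{so}(n)$, where $E_{ij}$ is the elementary matrix. Since the Killing form of $\fr{so}(n)$ is $B(X,Y)=(n-2)\tr(XY)$, a short computation gives $-B(F_{ij},F_{kl})=2(n-2)(\delta_{ik}\delta_{jl}-\delta_{il}\delta_{jk})$, so the vectors $e_{ij}=F_{ij}/\sqrt{2(n-2)}$ form a $(-B)$-orthonormal basis. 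Under this identification $\fr m_a=\fr{so}(k_a)$ is spanned by the $F_{ij}$ with $i,j\in I_a$, while $\fr m_{ab}$ is spanned by the $F_{ij}$ with $i\in I_a$, $j\in I_b$. I would record once and for all the single structural input needed, the bracket rule $[F_{ij},F_{kl}]=\delta_{jk}F_{il}+\delta_{il}F_{jk}-\delta_{jl}F_{ik}-\delta_{ik}F_{jl}$.

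For the diagonal symbol ${a \brack {aa}}$ I would avoid the combinatorics and argue conceptually. By definition $\sum_{\alpha,\beta,\gamma}(A^\gamma_{\alpha\beta})^2=\sum_{\alpha,\beta}\big\langle[e_\alpha,e_\beta],[e_\alpha,e_\beta]\big\rangle$, the sum over a $(-B)$-orthonormal basis of the subalgebra $\fr m_a$; using $\Ad$-invariance this equals $-\sum_\beta\langle e_\beta, C\,e_\beta\rangle$, where $C=\sum_\alpha\ad_{e_\alpha}^2$ is the Casimir operator of the adjoint representation of $\fr{so}(k_a)$ for the inner product $-B|_{\fr{so}(k_a)}$. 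Because $\fr{so}(k_a)$ is simple and diagonally embedded, $-B|_{\fr{so}(k_a)}=\tfrac{n-2}{k_a-2}(-B_{\fr{so}(k_a)})$ with $B_{\fr{so}(k_a)}$ its own Killing form; the adjoint Casimir in the Killing-normalized basis is $-\Id$ (check by taking traces), so $C=-\tfrac{k_a-2}{n-2}\Id$. Hence ${a \brack {aa}}=\tfrac{k_a-2}{n-2}\dim\fr{so}(k_a)=\tfrac{k_a(k_a-1)(k_a-2)}{2(n-2)}$.

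For the three remaining symbols I would substitute the bracket rule and exploit the disjointness of the blocks, which annihilates all but one or two Kronecker terms. For $e_\alpha=F_{ij},e_\beta=F_{i'j'}\in\fr m_{ab}$ ($i,i'\in I_a$, $j,j'\in I_b$) one gets $[F_{ij},F_{i'j'}]=-\delta_{jj'}F_{ii'}-\delta_{ii'}F_{jj'}$, whose $\fr m_b$-part is $-\delta_{ii'}F_{jj'}$ and whose $\fr m_a$-part is $-\delta_{jj'}F_{ii'}$; squaring, normalizing by $2(n-2)$, and counting the surviving tuples (a factor $k_a$ from $i=i'$ and a factor $k_b(k_b-1)$ from ordered pairs $j\neq j'$ in $I_b$) yields ${b \brack {(ab)(ab)}}=\tfrac{k_ak_b(k_b-1)}{2(n-2)}$, and symmetrically ${a \brack {(ab)(ab)}}=\tfrac{k_ak_b(k_a-1)}{2(n-2)}$. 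For $e_\alpha=F_{ij}\in\fr m_{ab}$ and $e_\beta=F_{j'l}\in\fr m_{bc}$ the rule collapses to the single term $[F_{ij},F_{j'l}]=\delta_{jj'}F_{il}\in\fr m_{ac}$, and summing $\tfrac{1}{2(n-2)}\delta_{jj'}\delta_{ii'}\delta_{ll'}$ over the blocks produces the factors $k_a,k_b,k_c$, i.e. ${(ac) \brack {(ab)(bc)}}=\tfrac{k_ak_bk_c}{2(n-2)}$.

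The computation is mechanical once the setup is fixed; the only place demanding care, and the main potential source of error, is the bookkeeping of the normalization constant $2(n-2)$ together with the convention that the orthonormal basis of each $\fr m_{ij}$ runs over all ordered index pairs drawn from the two blocks, whereas the basis of each $\fr{so}(k_a)$ runs over unordered pairs $i<j$ within one block. Keeping these multiplicities straight is exactly what distinguishes the factors $k_a(k_a-1)$ from $k_ak_b$ in the dimension counts, and hence what yields the precise rational coefficients asserted in the lemma.
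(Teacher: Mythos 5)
Your computation is correct, but note that the paper does not actually prove Lemma \ref{lemma5.20}: it is quoted directly from the reference \cite{ADN1}, so there is no in-paper proof to compare against. Your argument --- fixing the $(-B)$-orthonormal basis $e_{ij}=F_{ij}/\sqrt{2(n-2)}$ via $B(X,Y)=(n-2)\tr(XY)$, recording the bracket rule for the $F_{ij}$, and counting the surviving Kronecker deltas block by block --- is exactly the standard verification of these structure constants, and it reproduces all four values, including the distinction between the factor $k_a(k_a-1)$ (unordered pairs inside a block) and $k_ak_b$ (pairs across two blocks). Your Casimir shortcut for $\displaystyle{a \brack {aa}}$ is a clean alternative to brute-force counting and gives the right answer, since $-B|_{\fr{so}(k_a)}=\frac{n-2}{k_a-2}\bigl(-B_{\fr{so}(k_a)}\bigr)$.

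One small imprecision to fix: for $k_a=4$ the algebra $\fr{so}(4)$ is semisimple but not simple, so the appeal to ``simple, hence adjoint Casimir $=-\Id$ by a trace/Schur argument'' does not literally apply; the adjoint representation of $\fr{so}(4)$ is reducible. The conclusion nevertheless holds: the Killing form of $\fr{so}(4)$ restricts to the Killing form of each simple ideal, brackets between the two ideals vanish, and the adjoint Casimir is $-\Id$ on each ideal separately, so $C=-\frac{k_a-2}{n-2}\Id$ remains valid. Since the standing assumption in the paper is only $k_i\ge 3$, you should phrase the Casimir step for semisimple $\fr{so}(k_a)$ (arguing ideal by ideal) or treat $k_a=4$ as a separate, trivial check.
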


 By using the above lemma, we can now obtain the components of the Ricci tensor for the metrics 
(\ref{metric001}). 
\begin{prop}\label{prop5.4}
The components  of  the Ricci tensor ${r}$ for the left-invariant metric $ \langle \  \ ,\ \ \rangle $ on $\SO(n)$ defined by  \em{(\ref{metric001})}  are given as follows:  
\begin{equation*}\label{eq17}
\begin{array}{rcl} 
r_i &=&  \displaystyle{\frac{k_i-2}{4 (n -2)  x_i}  +\frac1{4(n-2)}\biggr(
\sum_{j=1, j\ne i}^p k_j\frac{x_i}{x_{ij}^2}\biggr)},\quad i=1, \dots , p
\\
\\
r_{ij} &=&   \displaystyle{\frac{1}{ 2 x_{ij}} +
 \sum_{l=1, l\ne i, j}^p\frac{k_l}{4 (n -2)}\biggl(\frac{x_{ij}}{x_{il} x_{jl}} - \frac{x_{il}}{x_{ij} x_{jl}}  - \frac{x_{jl}}{x_{ij} x_{il}}\biggr) } \\  \\
 &- & 
\displaystyle{\frac{(k_i -1)}{4 (n -2)}  \frac{ x_i}{{x_{ij}}^2}-\frac{(k_j -1)}{4 (n -2)}  
\frac{ x_j}{{x_{ij}}^2} }, \quad 1\le i<j\le p,
\end{array} 
\end{equation*}
where $n=k_1+\cdots +k_p$.
\end{prop}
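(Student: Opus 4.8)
The plan is to obtain both families of formulas by substituting the explicit structure constants of Lemma \ref{lemma5.20} into the general expressions of Proposition \ref{lemma5.1} and simplifying, using $d_i = k_i(k_i-1)/2$ and $d_{ij}=k_ik_j$. The only ingredient beyond Lemma \ref{lemma5.20} is the symmetry $\displaystyle{k \brack {ij}} = \displaystyle{j \brack {ki}}$ noted earlier, which is needed to rewrite those symbols in Proposition \ref{lemma5.1} that are not already in the normal form of Lemma \ref{lemma5.20}. Concretely, I would first record $\displaystyle{(ij) \brack {i(ij)}} = \displaystyle{i \brack {(ij)(ij)}} = \frac{k_i k_j(k_i-1)}{2(n-2)}$ and, after identifying the shared index (and using that $\fr{m}_{pq}$ depends only on the unordered pair $\{p,q\}$), $\displaystyle{(ij) \brack {(il)(jl)}} = \frac{k_i k_j k_l}{2(n-2)}$.

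For the components $r_i$ I would insert $\displaystyle{i \brack {ii}}$, $\displaystyle{i \brack {(ij)(ij)}}$ and $\displaystyle{(ij) \brack {i(ij)}}$ into the expression of Proposition \ref{lemma5.1}. The two sum terms collapse at once: the factor $k_i(k_i-1)$ cancels against $d_i$, leaving $\frac{1}{4(n-2)}\sum_{j\ne i}k_j\, x_i/x_{ij}^2$ from the curvature contribution and $\frac{1}{2(n-2)x_i}\sum_{j\ne i}k_j$ from the remaining negative sum. The delicate point, and essentially the only obstacle, is the coefficient of $1/x_i$: after substitution it receives four contributions, namely $\tfrac12$, the diagonal piece $\frac{k_i-2}{4(n-2)}$, and the two negative pieces $-\frac{k_i-2}{2(n-2)}$ and $-\frac{1}{2(n-2)}\sum_{j\ne i}k_j$. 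Using $\sum_{j\ne i}k_j = n-k_i$ and reducing over the common denominator $4(n-2)$, the numerator becomes $2(n-2)+(k_i-2)-2(k_i-2)-2(n-k_i)=k_i-2$, so the coefficient of $1/x_i$ is exactly $\frac{k_i-2}{4(n-2)}$. This yields the claimed formula for $r_i$; the computation is short but requires keeping track of all four terms and correctly applying $\sum_{j\ne i}k_j=n-k_i$.

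For the components $r_{ij}$ the substitution is more transparent, since no further $1/x_{ij}$ term is produced beyond the universal $\frac{1}{2x_{ij}}$, so no nontrivial combination of like terms is needed. Inserting $d_{ij}=k_ik_j$, the prefactor $\frac{1}{2d_{ij}}\,\displaystyle{(ij) \brack {(il)(jl)}}$ reduces to $\frac{k_l}{4(n-2)}$ for each $l\ne i,j$, which is precisely the coefficient multiplying the three-term bracket, while the last two terms contribute $-\frac{k_i-1}{4(n-2)}\,x_i/x_{ij}^2$ and $-\frac{k_j-1}{4(n-2)}\,x_j/x_{ij}^2$ coming from $\displaystyle{i \brack {(ij)(ij)}}$ and $\displaystyle{j \brack {(ij)(ij)}}$ respectively. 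Collecting these gives the stated expression for $r_{ij}$ and completes the proof.
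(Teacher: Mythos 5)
Your proposal is correct and follows exactly the paper's (implicit) proof: the paper derives Proposition \ref{prop5.4} precisely by substituting the structure constants of Lemma \ref{lemma5.20} into Proposition \ref{lemma5.1}, using $d_i=k_i(k_i-1)/2$, $d_{ij}=k_ik_j$ and the symmetry of the symbols, and your bookkeeping of the four contributions to the coefficient of $1/x_i$ (yielding $(k_i-2)/4(n-2)$ via $\sum_{j\ne i}k_j=n-k_i$) is the computation the paper leaves to the reader.
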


\section{Left-invariant Einstein metrics on $\SO(n)$} 
In this section we consider 
the case where $k_1 \ge3$,  $k_2=\cdots =k_p =k\ge 3$, so that $n=k_1+k(p-1)$.
We consider  the left-invariant metric $ \langle \  \ ,\ \ \rangle $ on $\SO(n)$ of the form {(\ref{metric001})}  with  
 $$ \ x_2=\cdots =x_p,\  x_{12} = \cdots =x_{1p},\  x_{23}=x_{ij}\ \mbox{for}\ 2\le i<j\le p, $$  
 that is, 
  \begin{equation*} 
 \langle \  \ ,\ \ \rangle = x_1 \, (-B) |_{\fr{so}(k_1)} + x_2 \sum_{i=2}^p  (-B) |_{\fr{so}(k_i)}+  x_{12}\sum_{2\le j\le p}  (-B) |_{\fr{m}_{1j}} + x_{23} \sum_{2\le i<j\le p} (-B) |_{\fr{m}_{ij}}. 
\end{equation*}
For these metrics Proposition \ref{prop5.4} reduces to the following:

\begin{prop}\label{prop5.4_1}
The components  of  the Ricci tensor ${r}$ for the left-invariant metric $ \langle \  \ ,\ \ \rangle $ on $\SO(n)$ defined by  \em{(\ref{metric009})} satisfy  $r_2= r_j$ ($3 \leq j\leq p$),  $r_{12} = r_{1j}$ ($2 <  j \leq p$)  and $r_{23} = r_{ij}$ ($2 \leq i  <  j \leq p$). These components are given as follows:  
\begin{equation*}\label{eq18}
\begin{array}{rcl} 
r_1 &=&  \displaystyle{\frac{k_1-2}{4 (n -2)  x_1}  +\frac{k (p-1)}{4(n-2)}
 \frac{x_1}{{x_{12}}^2}},
 \\  \\
r_2 &=&  \displaystyle{\frac{k-2}{4 (n -2)  x_2}  +\frac1{4(n-2)}\biggr(
 k_1\frac{x_2}{{x_{12}}^2}+  k (p-2)\frac{x_2}{{x_{23}}^2}\biggr)},
 \\   \\
r_{12} &=&   \displaystyle{\frac{1}{ 2 x_{12}} - \frac{k (p-2)}{4 (n -2)}\frac{x_{23}}{{x_{12}}^2} }
-
\displaystyle{\frac{(k_1 -1)}{4 (n -2)}  \frac{ x_1}{{x_{12}}^2}-\frac{(k -1)}{4 (n -2)}  
\frac{ x_2}{{x_{12}}^2} }, 
\\ \\
r_{23} &=&   \displaystyle{\frac{1}{ 2 x_{23}} +\frac{k_1}{4 (n -2)}\biggr(\frac{x_{23}}{{x_{12}}^2}  - \frac{2}{x_{23}}\biggr) }
-  \displaystyle{\frac{k (p-3)}{4 (n -2)}\frac{1}{  x_{23}} }-
\displaystyle{\frac{(k -1)}{2 (n -2)}  \frac{ x_2}{{x_{23}}^2}}.  
\end{array} 
\end{equation*}
\end{prop}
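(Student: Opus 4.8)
The plan is to obtain the four displayed formulas by substituting the symmetric ansatz of~(\ref{metric009}) directly into the general Ricci components of Proposition~\ref{prop5.4}, and to read off the coincidences $r_2=r_j$, $r_{12}=r_{1j}$, $r_{23}=r_{ij}$ as a by-product of that substitution. Concretely, I would insert
$$x_2=\cdots=x_p,\qquad x_{12}=\cdots=x_{1p},\qquad x_{ij}=x_{23}\ \ (2\le i<j\le p),$$
together with $k_2=\cdots=k_p=k$, into the expressions for $r_i$ and $r_{ij}$, and then collect terms according to how many indices of each type appear in the relevant summations. After this substitution the value of $r_i$ for any $i\in\{2,\dots,p\}$ depends on the index only through quantities that are now index-independent, and likewise for $r_{1j}$ and for $r_{ij}$ with $2\le i<j$; hence the asserted equalities of components are immediate, and the real content of the statement is the explicit reduced formulas. (Equivalently, these coincidences reflect that permuting the last $p-1$ diagonal blocks is an automorphism of $\fr{so}(n)$ preserving both $-B$ and the metric~(\ref{metric009}), and therefore commuting with the Ricci tensor.)

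For the diagonal components I would treat $r_1$ and $r_i$ ($i\ge2$) separately. In $r_1$ the single sum $\sum_{j=2}^{p}k_j\,x_1/x_{1j}^2$ consists of $p-1$ identical terms, each equal to $k\,x_1/x_{12}^{2}$, which produces the stated factor $k(p-1)$. For $r_i$ with $i\ge2$ the sum $\sum_{j\ne i}k_j\,x_i/x_{ij}^2$ splits into the single $j=1$ term, contributing $k_1\,x_2/x_{12}^2$, and the $p-2$ terms with $j\ge2,\ j\ne i$, each contributing $k\,x_2/x_{23}^2$; this yields $r_2$ and simultaneously exhibits its independence of $i$.

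The off-diagonal components require the same splitting applied to the three-term summands, and this is where the only genuine care is needed. In $r_{12}$ the sum runs over $l\in\{3,\dots,p\}$, i.e.\ $p-2$ terms, in each of which $x_{1l}=x_{12}$ and $x_{2l}=x_{23}$; the first two summands both reduce to $1/x_{23}$ and cancel, leaving $-x_{23}/x_{12}^2$ per term and hence the combined factor $k(p-2)$. In $r_{23}$ the summation index $l$ ranges over $\{1,\dots,p\}\setminus\{2,3\}$, which I would partition into the single value $l=1$ — where $x_{2l}=x_{3l}=x_{12}$, the summand equals $x_{23}/x_{12}^2-2/x_{23}$, and the weight is $k_1/(4(n-2))$ — and the remaining $p-3$ values $l\ge4$, where all three variables equal $x_{23}$ and each summand collapses to $-1/x_{23}$, contributing the combined factor $k(p-3)$. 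Finally, combining the two equal boundary terms $-(k-1)x_2/x_{23}^2$ into $-(k-1)x_2/\bigl(2(n-2)x_{23}^2\bigr)$ gives the displayed $r_{23}$.

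I expect the main obstacle to be purely combinatorial bookkeeping: correctly counting the multiplicities $p-1$, $p-2$, and especially $p-3$ in the various index ranges — the last being easy to miscount, since it excludes both members of the pair among $\{2,\dots,p\}$ — and verifying that in the mixed summands of $r_{12}$ and $r_{23}$ the intended cancellations occur exactly as above. No ideas beyond Proposition~\ref{prop5.4} are required.
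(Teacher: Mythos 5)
Your proposal is correct and is exactly the paper's own route: the paper states this proposition as a direct reduction of Proposition \ref{prop5.4} under the ansatz (\ref{metric009}), which is precisely the substitution and multiplicity count ($p-1$, $p-2$, $p-3$, and the cancellation $1/x_{23}-1/x_{23}$ in $r_{12}$) that you carry out. All your reduced expressions match the displayed formulas, so there is nothing further to add.
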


We can now prove one of the main results.

\begin{theorem}\label{thm6.2}
 Let $p\ge 3$ and $k\geq 3$.  If $k_1 > k\ge 3$, then  the Lie group  $\SO(n)$ {\em ($n=k_1 +(p-1)k$) }admits at least two  $\Ad( \SO(k_1)\times(\SO(k))^{p-1})$-invariant Einstein metrics of the form {\em (\ref{metric009})}, which are not naturally reductive. If $k_1 = k \ge 3$, then the Lie group  $\SO(n)$ {\em ($n=p k$) }admits at least one $\Ad( (\SO(k))^{p})$-invariant Einstein metric which is not naturally reductive.
\end{theorem}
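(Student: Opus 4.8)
The plan is to read the Einstein condition off Proposition \ref{prop5.4_1} as the algebraic system $r_1 = r_2 = r_{12} = r_{23}$. Since Einstein metrics are defined only up to scaling, I would first normalize $x_{23} = 1$, leaving the unknowns $(x_1, x_2, x_{12})$ together with the Einstein constant $\lambda$. Multiplying each Ricci component by the common factor $4(n-2)$ and eliminating $\lambda$ by forming the three differences $r_1 - r_2$, $r_2 - r_{12}$, $r_{12} - r_{23}$, I obtain three rational equations in $(x_1, x_2, x_{12})$ whose coefficients are explicit polynomials in $k_1, k, p$. Clearing denominators by suitable monomials in the $x$'s converts these into a genuine polynomial system, determined (three equations in three unknowns) and hence generically having finitely many solutions.

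Next I would eliminate two of the three variables. Using resultants, or equivalently a lexicographic Gr\"obner basis, I would successively remove $x_2$ and then $x_1$, arriving at a single polynomial $G(x_{12}) = 0$ in one variable, with $x_1$ and $x_2$ recovered as rational functions of $x_{12}$ along each solution branch. The positive Einstein metrics then correspond precisely to the positive roots $x_{12}$ of $G$ for which the induced values $x_1, x_2$ are also positive, so positivity must be tracked along each branch.

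The decisive step is to count these positive roots \emph{uniformly} in the integer parameters $k_1, k, p$ over the stated ranges. I would evaluate $G$ — or rather its relevant irreducible factor, after discarding spurious factors introduced when clearing denominators — at a few strategically chosen values of $x_{12}$, typically $x_{12} \to 0^+$, $x_{12} \to \infty$, and one or two intermediate anchor points, and follow the sign of the leading coefficient together with these sample signs. Exhibiting at least two sign changes when $k_1 > k$, and at least one when $k_1 = k$, then yields the claimed number of roots by the intermediate value theorem. I expect this uniform sign analysis to be the main obstacle: the coefficients of $G$ are cumbersome polynomials in $k_1, k, p$, so the argument will likely require either careful estimates isolating the dominant terms (treating $k_1$ and $p$ as large and dealing with the finitely many small residual cases by hand) or a rescaling of the variables that renders the sign structure transparent.

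Finally I would confirm that each solution is a genuine non-naturally-reductive metric. Positivity of $x_1, x_2, x_{12}$ is secured along the branch as above. To exclude natural reductivity I invoke Proposition \ref{prop5.1}: with $x_{23} = 1$ the metric is naturally reductive only if it is bi-invariant, or $x_2 = 1$, or $x_{12} = 1$. I would therefore verify that the roots produced satisfy $x_2 \neq 1$ and $x_{12} \neq 1$ and are not the bi-invariant point $x_1 = x_2 = x_{12} = 1$, either by checking that substituting $x_2 = 1$ or $x_{12} = 1$ into the reduced system forces the excluded bi-invariant solution, or by locating the Einstein roots away from these hyperplanes within the sign analysis already carried out. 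This guarantees that the metrics found are not naturally reductive.
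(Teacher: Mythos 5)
Your overall architecture does coincide with the paper's (set up $r_1=r_2=r_{12}=r_{23}$ from Proposition \ref{prop5.4_1}, normalize one variable, eliminate down to a single one-variable polynomial, count positive roots by sign changes, and invoke Proposition \ref{prop5.1} to exclude natural reductivity), but there is a genuine gap precisely at the step you yourself flag as ``the main obstacle'': that step is not an incidental difficulty to be handled by asymptotics --- it \emph{is} the proof, and your proposed tactics would not close it. Moreover, your elimination strategy misses the structural fact the paper's argument is built on. The paper normalizes $x_{12}=1$ (not $x_{23}=1$) and observes that the equation $r_2=r_{23}$ factors: $g_3=(x_2-x_{23})\big((kp-2)x_2-(k-2)x_{23}+k_1x_2x_{23}^2\big)$ in (\ref{eq26g}). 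The factor $x_2=x_{23}$ is exactly the naturally reductive locus of Proposition \ref{prop5.1} (it contains the bi-invariant solution) and is discarded \emph{before} elimination; on the remaining branch $x_2$ is the explicit rational function (\ref{sakane:eqx_2}) of $x_{23}$, then $f_2=0$ is linear in $x_1$, giving (\ref{eq_x1_rat}), and substitution produces a single degree-eight polynomial $H_1(x_{23})$, all by hand. Because the naturally reductive branch is removed first, every root of $H_1$ automatically satisfies $x_2\neq x_{23}$. In your blind resultant/Gr\"obner elimination, the bi-invariant Einstein metric (which in your normalization sits at $x_1=x_2=x_{12}=1$) and any other naturally reductive Einstein solutions survive as roots of $G(x_{12})$ --- in particular $G(1)=0$ identically --- so ``two sign changes of $G$'' does not yet yield two non-naturally reductive metrics; separating out these spurious roots uniformly in $(k_1,k,p)$ is extra work your outline does not supply.

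The second, decisive gap is the sign analysis itself. The useful anchors are not $x\to 0^+$, $x\to\infty$ and generic intermediate points: in the paper's eliminant both the leading coefficient and $H_1(0)$ are positive, so the two ends carry no information and the entire content lies in the interior evaluations. The crux of the paper is the discovery that evaluation at the naturally reductive value $x_{23}=1\,(=x_{12})$ factors cleanly, $H_1(1)=(k-1)(k-k_1)(kp-k+k_1-1)(kp-k+k_1)^2$, so its sign is governed exactly by $k_1-k$; combined with $H_1(0)>0$ and $H_1(2)>0$ --- the latter proved by expanding in the shifted variables $k_1-k$, $k-3$, $p-3$ and checking all coefficients are nonnegative --- this places roots in $(0,1)$ and $(1,2)$, automatically bounded away from the naturally reductive hyperplane $x_{23}=x_{12}$. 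A ``dominant term for large $k_1,p$'' estimate cannot substitute for this: the claim is for all $k_1>k\ge 3$, $p\ge 3$, and the Remark following the theorem shows the sign structure is delicate (for $(k_1,k,p)=(3,4,4)$, i.e. $k_1<k$, the polynomial $H_1$ has no real roots at all), so any argument must track the sign of $k_1-k$ exactly as the factorization of $H_1(1)$ does. For $k_1=k$ the anchor degenerates, $H_1(1)=0$ with that root being naturally reductive; the paper must divide by $(x_{23}-1)$ and redo the analysis on the quotient $G_1$, with $G_1(1)<0<G_1(2)$ --- a degeneration your plan does not anticipate. Finally, positivity of $x_1$ is not ``tracked along the branch'': it requires a separate argument, namely that $x_1$ satisfies the quadratic equation $g_1=0$ whose value at $x_1=0$ equals $(k_1-2)x_2x_{23}^2>0$ and whose vertex lies at a positive abscissa, so any real root is automatically positive. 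These concrete ingredients --- the pre-factorization of $g_3$, the anchor $x_{23}=1$ with its clean factorization, the $k_1=k$ degeneration, and the quadratic positivity argument --- are what your proposal is missing, and without them it does not constitute a proof.
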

    \begin{proof}
     We consider the system of equations 
 \begin{equation}\label{eq26} 
r_1= r_2, \quad r_2 =  r_{12}, \quad  r_{2} = r_{23}. 
 \end{equation}
 Then finding Einstein metrics of the form (\ref{metric009})  reduces  to finding positive solutions of the  system (\ref{eq26}). 
 We consider  our equations by putting $x_{12} =1$.   
  
 Then the system of equations (\ref{eq26}) reduces to  
 the following system of algebraic equations: 
 \begin{equation}\label{eq26g}
\left\{ { \begin{array}{lll}
g_1&  = & k (p-1) {x_1}^2 {x_{2}} {x_{23}}^2-k (p-2)
   {x_1}  {x_{2}}^2-(k-2) {x_1}
    {x_{23}}^2\\&&-{k_{1}} {x_1}
   {x_{2}}^2 {x_{23}}^2+({k_{1}}-2) 
   {x_{2}} {x_{23}}^2=0,\\
g_2 &= & -2 {x_{2}}
   {x_{23}}^2 (k p-k+{k_{1}}-2)+{x_{2}}^2
   {x_{23}}^2 (k+{k_{1}}-1)+k (p-2) 
   {x_{2}}^2\\&&+k (p-2) {x_{2}} {x_{23}}^3+(k-2)
   {x_{23}}^2+({k_{1}}-1) {x_1}
   {x_{2}} {x_{23}}^2-{x_{2}}^2
   {x_{23}}^2,
 \\
g_3& =&  ({x_{2}}-{x_{23}})
   \big( (k p-2){x_{2}} -(k-2) 
   {x_{23}}+{k_{1}} {x_{2}}
   {x_{23}}^2\big)=0. 
\end{array} } \right.
\end{equation}
Note that, if ${x_{2}}={x_{23}}$, we see that $\Ad( \SO(k_1)\times(\SO(k))^{p-1})$-invariant metrics of the form (\ref{metric009}) are naturally reductive metrics by Proposition \ref{prop5.1}. Thus we consider the case when  $ \,x_2 \neq x_{23}$ and $x_{23} \neq 1 =  x_{12}$.  From the equation $g_3=0$ we have 
 \begin{equation}\label{sakane:eqx_2}
 \begin{array}{l}  
 \displaystyle
{x_2}= \frac{(k-2) {x_{23}} }{k p -2+{k_1}{x_{23}}^2}.
\end{array}
\end{equation}
Note that, from (\ref{sakane:eqx_2}), we have $x_2 \neq x_{23}$. 
By substituting (\ref{sakane:eqx_2}) into the system (\ref{eq26g}), we obtain the system of equations
 \begin{equation}\label{eq26b}
\left\{
{ \begin{array}{lll}
f_1& =& {x_{1}} \big(-k^2 ((p-1)) (p+2)+2 k (3
   p-2)-4\big)+k {k_{1}} (p-1) {x_{1}}^2
   {x_{23}}^3\\&& -{k_{1}} {x_{1}} {x_{23}}^2 (2 k
   p+k-6)+({k_{1}}-2) {x_{23}} (k p-2)\\&&+k (p-1)
   {x_{1}}^2 {x_{23}} (k p-2) -{k_{1}}^2 {x_{1}}
   {x_{23}}^4+({k_{1}}-2) {k_{1}}
   {x_{23}}^3=0,\\
  f_2& =& 
   {x_{23}}^2 \big(k^2 (p-1)^2+2 k
   {k_{1}} p+k {k_{1}}-2 k p+k-6
   {k_{1}}+2\big)\\&&
   +k^2 (p-1) (p+2)+({k_{1}}-1)
   {x_{1}} {x_{23}} (k p-2)+{k_{1}} {x_{23}}^4 (k
   p-2 k+{k_{1}})\\ && -2 {k_{1}} {x_{23}}^3 (k
   p-k+{k_{1}}-2)-2 {x_{23}} (k p-2) (k
   p-k+{k_{1}}-2) \\&& +({k_{1}}-1) {k_{1}}
   {x_{1}} {x_{23}}^3-6 k p+4 k+4=0. 
  \end{array}}
  \right.
\end{equation}
From the equation $f_2=0$, we have 
 \begin{equation}\label{eq_x1_rat}
 \begin{array}{l}  
 \displaystyle
{x_1}= (-{x_{23}}^2 \big(k^2
   (p-1)^2+2 k {k_{1}} p+k {k_{1}}-2 k p+k-6
   {k_{1}}+2\big)\\
   -k^2 (p-1) (p+2)-{k_{1}}
   {x_{23}}^4 (k p-2 k+{k_{1}})+2 {k_{1}}
   {x_{23}}^3 (k p-k+{k_{1}}-2)\\+2 {x_{23}} (k p-2)
   (k p-k+{k_{1}}-2)+6 k p-4 k-4)/\\(({k_{1}}-1)
   {x_{23}} (k p-2)+({k_{1}}-1) {k_{1}}
   {x_{23}}^3).
\end{array}
\end{equation}
By substituting (\ref{eq_x1_rat}) into  $f_1 $, we obtain the equation 
 \begin{equation*}\label{eqx23_deg8}  
{ \small \begin{array}{l}
H_1(x_{23})=  {k_{1}}^2 (p k-2 k+{k_{1}}) \big(p^2 k^2-3 p k^2+2
   k^2-{k_{1}} k+{k_{1}} p
   k+{k_{1}}^2-{k_{1}}\big) {x_{23}}^8 
   \\-2 {k_{1}}^2 (p k-k+{k_{1}}-2) \big(2 p^2 k^2-6 p
   k^2+4 k^2-2 {k_{1}} k+2 {k_{1}} p
   k+{k_{1}}^2-{k_{1}}\big) {x_{23}}^7\\
   +{k_{1}}
   \big(2 p^4 k^4-10 p^3 k^4+18 p^2 k^4-14 p k^4+4 k^4+10
   {k_{1}} p^3 k^3-4 p^3 k^3-28 {k_{1}} p^2 k^3\\+14 p^2
   k^3-2 {k_{1}} k^3+20 {k_{1}} p k^3-14 p k^3+4 k^3+5
   {k_{1}}^2 k^2+15 {k_{1}}^2 p^2 k^2-35 {k_{1}} p^2
   k^2\\
   +4 p^2 k^2-41 {k_{1}} k^2-23 {k_{1}}^2 p k^2+79
   {k_{1}} p k^2-12 p k^2+8 k^2-2 {k_{1}}^3 k+39
   {k_{1}}^2 k\\-33 {k_{1}} k+8 {k_{1}}^3 p k-40
   {k_{1}}^2 p k+28 {k_{1}} p k+{k_{1}}^4-16
   {k_{1}}^3+19 {k_{1}}^2-4 {k_{1}}\big)
   {x_{23}}^6
              \end{array} }
\end{equation*}
 \begin{equation*}
{\small \begin{array}{l}
   -2 {k_{1}} (p k-k+{k_{1}}-2) \big(4
   p^3 k^3-12 p^2 k^3+10 p k^3-2 k^3+6 {k_{1}} p^2 k^2-8
   p^2 k^2
   -2 {k_{1}} k^2\\-4 {k_{1}} p k^2+18 p k^2-10
   k^2+{k_{1}}^2 k+15 {k_{1}} k+3 {k_{1}}^2 p k-19
   {k_{1}} p k +4 p k-4 k-8 {k_{1}}^2+8
   {k_{1}}\big) {x_{23}}^5\\
   +\big(p^5 k^5-5 p^4
   k^5+10 p^3 k^5-10 p^2 k^5+5 p k^5-k^5+14 {k_{1}} p^4
   k^4-4 p^4 k^4-38 {k_{1}} p^3 k^4\\
   +14 p^3 k^4+24
   {k_{1}} p^2 k^4-18 p^2 k^4-10 {k_{1}} k^4+10
   {k_{1}} p k^4+10 p k^4-2 k^4+25 {k_{1}}^2 p^3
   k^3\\
   -83 {k_{1}} p^3 k^3+8 p^3 k^3+8 {k_{1}}^2 k^3-36
   {k_{1}}^2 p^2 k^3+204 {k_{1}} p^2 k^3-20 p^2 k^3+37
   {k_{1}} k^3\\+3 {k_{1}}^2 p k^3-158 {k_{1}} p
   k^3+17 p k^3-5 k^3-3 {k_{1}}^3 k^2-38 {k_{1}}^2
   k^2+14 {k_{1}}^3 p^2 k^2-122 {k_{1}}^2 p^2 k^2\\
   +152
   {k_{1}} p^2 k^2-8 p^2 k^2+101 {k_{1}} k^2-2
   {k_{1}}^3 p k^2+150 {k_{1}}^2 p k^2-252 {k_{1}} p
   k^2+12 p k^2\\
   -4 k^2+12 {k_{1}}^3 k-116 {k_{1}}^2
   k+100 {k_{1}} k+2 {k_{1}}^4 p k-60 {k_{1}}^3 p
   k+174 {k_{1}}^2 p k\\-112 {k_{1}} p k+4 p k-4 k-4
   {k_{1}}^4+60 {k_{1}}^3-76 {k_{1}}^2+20
   {k_{1}}\big) {x_{23}}^4\\
   -2 (p k-k+{k_{1}}-2)
   \big(2 p^4 k^4-6 p^3 k^4+6 p^2 k^4-2 p k^4+6 {k_{1}}
   p^3 k^3-8 p^3 k^3-2 {k_{1}} p^2 k^3\\
   +18 p^2 k^3+4
   {k_{1}} k^3-8 {k_{1}} p k^3-14 p k^3+4 k^3-2
   {k_{1}}^2 k^2+3 {k_{1}}^2 p^2 k^2 
   -35 {k_{1}} p^2 k^2\\+12 p^2 k^2 -2 {k_{1}} k^2+2 {k_{1}}^2 p k^2+34
   {k_{1}} p k^2-16 p k^2+4 k^2+2 {k_{1}}^2 k-34
   {k_{1}} k\\-16 {k_{1}}^2 p k+48 {k_{1}} p k-8 p k+8
   k+16 {k_{1}}^2-16 {k_{1}}\big)
   {x_{23}}^3\\
   +\big(6 p^5 k^5-16 p^4 k^5+8 p^3 k^5+12
   p^2 k^5-14 p k^5+4 k^5+13 {k_{1}} p^4 k^4-49 p^4
   k^4\\
   -15 {k_{1}} p^3 k^4+127 p^3 k^4-7 {k_{1}} p^2
   k^4-109 p^2 k^4+2 {k_{1}} k^4+7 {k_{1}} p k^4
   +33 p k^4-2 k^4\\+8 {k_{1}}^2 p^3 k^3-96 {k_{1}} p^3
   k^3+140 p^3 k^3-4 {k_{1}}^2 k^3+2 {k_{1}}^2 p^2
   k^3+117 {k_{1}} p^2 k^3-283 p^2 k^3\\
   -26 {k_{1}}
   k^3-6 {k_{1}}^2 p k^3+5 {k_{1}} p k^3+169 p k^3-26
   k^3+32 {k_{1}}^2 k^2+{k_{1}}^3 p^2 k^2\\
   -56
   {k_{1}}^2 p^2 k^2+243 {k_{1}} p^2 k^2-188 p^2
   k^2+44 {k_{1}} k^2+8 {k_{1}}^2 p k^2-268 {k_{1}}
   p k^2\\
   +276 p k^2-92 k^2-56 {k_{1}}^2 k+164 {k_{1}}
   k-4 {k_{1}}^3 p k+120 {k_{1}}^2 p k-240 {k_{1}} p
   k\\+108 p k-92 k+4 {k_{1}}^3-64 {k_{1}}^2+76
   {k_{1}}-16\big) {x_{23}}^2\\
   -2 (k p-2) (p
   k-k+{k_{1}}-2) (2 p k-2 k+{k_{1}}-1) \big(p^2
   k^2+p k^2-2 k^2-6 p k+4 k+4\big) {x_{23}}\\
   +(p
   k-k+{k_{1}}-1) \big(p^2 k^2+p k^2-2 k^2-6 p k+4
   k+4\big)^2=0. 
\end{array} }
\end{equation*}
Now we observe that 
 \begin{equation}\label{sakane:values_h(x23)}
{\small \begin{array}{l}
H_1(0) 
   = 2 \big(k^2 (p-1) (p+2)+k (3 p-2)+1\big)^2 (2 k (p-1)+2 {k_1}+1),\\ \\
H_1(1)=(k-1) (k- {k_1}) (k p-k+ {k_1}-1) (k p-k+{k_1})^2),\\ \\ 
H_1(2) =64 ({k_{1}}-k)^5+ ({k_1}-k)^4 ((k-3) (96 (p-3)+416)+288
   (p-3)+1184)\\
   +({k_{1}}-k)^3 \big((k-3)^2 \big(52
   (p-3)^2+568 (p-3)+1092\big)+(k-3)  (312
   (p-3)^2\\
   +3264 (p-3)+6408)+468 (p-3)^2+4680
   (p-3)+9284\big)\\
   +({k_{1}}-k)^2 \big((k-3)^3
   \big(28 (p-3)^3+268 (p-3)^2+1268
   (p-3)+1476\big)\\
   +(k-3)^2 \big(252 (p-3)^3+2460
   (p-3)^2+11068 (p-3)+13340\big)\\
   +(k-3) \big(756
   (p-3)^3+7524 (p-3)^2
   +32428 (p-3)+40012\big)\\
   +756
   (p-3)^3+7668 (p-3)^2+31908
   (p-3)+39924\big)\\
   +({k_{1}}-k) \big((k-3)^4 \big(9
   (p-3)^4+106 (p-3)^3+477 (p-3)^2+1292
   (p-3)+1040\big)\\
   +(k-3)^3 \big(108 (p-3)^4+1304
   (p-3)^3+5968 (p-3)^2+15468 (p-3)+12872\big)\\
   +(k-3)^2
   \big(486 (p-3)^4+6012 (p-3)^3+28010 (p-3)^2+70164
   (p-3)+59840\big)\\+(k-3) \big(972 (p-3)^4+12312
   (p-3)^3+58440 (p-3)^2+142804 (p-3)+123880\big)\\
   +729
   (p-3)^4+9450 (p-3)^3+45729 (p-3)^2+109920
   (p-3)+96416\big)\\
   +(k-3)^5
   \big((p-3)^5+17 (p-3)^4+103 (p-3)^3+299 (p-3)^2+524
   (p-3)+312\big)\\
   +(k-3)^4 \big(15 (p-3)^5+258
   (p-3)^4+1599 (p-3)^3+4740 (p-3)^2+8092
   (p-3)+4964\big)\\
   +(k-3)^3 \big(90 (p-3)^5+1566
   (p-3)^4+9918 (p-3)^3+30030 (p-3)^2\\
   +50420
   (p-3)+31712\big)+(k-3)^2 \big(270 (p-3)^5+4752
   (p-3)^4+30726 (p-3)^3\\
   +95028 (p-3)^2+158164
   (p-3)+101508\big)+(k-3) \big(405 (p-3)^5+7209
   (p-3)^4\\
   +47547 (p-3)^3+150183 (p-3)^2+249376
   (p-3)+162656\big)+243 (p-3)^5\\
   +4374 (p-3)^4+29403
   (p-3)^3+94824 (p-3)^2+157872 (p-3)+104336. 
\end{array} }
\end{equation}
From the above, for $k_1 \geq k \geq 3$ and  $ p \geq 3$, we see that
$$H_1(0) > 0,  \quad  H_1(2) > 0, $$ 
and,  for $k_1 > k $,  $H_1(1) < 0$. 


Hence,  it follows that the equation $H_1(x_{23})=0$ has at least two positive solutions $x_{23}=\alpha$ and $x_{23}=\beta$  with 
$0 < \alpha < 1< \beta < 2$. Also, from (\ref{eq_x1_rat}) we  see that the values of $x_1$ corresponding to these values $x_{23}=\alpha$ and $x_{23}=\beta$  are real numbers.

For $k_1=k$ we have that $H_1(x_{23})= (x_{23}-1) G_1(x_{23})$, where 
 \begin{equation*}\label{eqx23_k1=k}
\begin{array}{l}
G_1(x_{23})=k^4 (p-1) {x_{23}}^7 \big(k p^2-2 k p+2 k-1\big)\\
-k^3 {x_{23}}^6 \big(3 k^2 p^3-5
   k^2 p^2+2 k^2 p+2 k^2-8 k p^2+15 k p-13 k+4\big)\\
   +k^2 {x_{23}}^5 \big(2 k^3
   p^4-3 k^3 p^3+10 k^3 p^2-11 k^3 p+4 k^3-4 k^2 p^3\\
   -13 k^2 p^2+10 k^2
   p-k^2+4 k p^2+16 k p-10 k-4\big)\\
   -k^2 {x_{23}}^4 \big(6 k^3 p^4-9 k^3
   p^3+8 k^3 p^2+5 k^3 p-4 k^3-28 k^2 p^3+35 k^2 p^2\\
   -52 k^2 p+13 k^2+36 k
   p^2-4 k p+22 k-16 p-12\big)\\
   +k {x_{23}}^3 \big(k^4 p^5+3 k^4 p^4+6 k^4
   p^3-16 k^4 p^2+13 k^4 p-2 k^4-4 k^3 p^4\\
   -41 k^3 p^3+29 k^3 p^2-6 k^3 p-8
   k^3+8 k^2 p^3+96 k^2 p^2-57 k^2 p\\
   +18 k^2-8 k p^2-84 k p+32 k+4
   p+16\big)\\
   -k {x_{23}}^2 \big(3 k^4 p^5-3 k^4 p^4+8 k^4 p^3-9 k^4 p+2
   k^4-20 k^3 p^4+7 k^3 p^3-49 k^3 p^2\\
   +46 k^3 p+48 k^2 p^3+36 k^2 p^2+13 k^2
   p-34 k^2-56 k p^2-60 k p+24 k+28 p+16\big)\\
   +{x_{23}} \big(k^2
   p^2+k^2 p-2 k^2-6 k p+4 k+4\big) \big(3 k^3 p^3-3 k^3 p^2+2 k^3 p-11
   k^2 p^2\\
   +5 k^2 p-2 k^2+14 k p-4 k-4\big)+(1-k p) \big(k^2
   p^2+k^2 p-2 k^2-6 k p+4 k+4\big)^2.
\end{array} 
\end{equation*}
Now we observe that, for $p \geq3$ and $k\geq3$,  
$$G_1(1)= -2 (k-2) (k-1) k p \big(k^2 p^2-2 k p-k+2\big) < 0, \quad G_1(2) = H_1(2) >0.$$
Therefore, equation $G_1(x_{23})=0$ has at least one positive solution $x_{23}=\gamma$   with 
$1 < \gamma  < 2$. From (\ref{eq_x1_rat}), we also see that the value of $x_1$ corresponding to the value $x_{23}=\gamma $ is  a real number. 

%

Now we show that, for positive solutions $x_2$, $x_{23}$ above, the value  of solution  $x_1$ is positive. 
Note that the value $x_1$ given by (\ref{eq_x1_rat}) also satisfies  the equation $g_1=0$ in (\ref{eq26g}) and that 
 $g_1$  is a quadratic polynomial with respect to $x_1$.  We consider the two solutions of the quadratic equation $g_1=0$ with respect to $x_1$. 
We can write the  quadratic polynomial $g_1$  with respect to $x_1$ as 
\begin{eqnarray}\label{quadratic_g1}
 &&
g_1= k (p-1) {x_2} {x_{23}}^2
   \left({x_1}-\frac{k (p-2) {x_2}^2+(k-2) {x_{23}}^2+{k_1} {x_2}^2 {x_{23}}^2}{2 k (p-1) {x_2} {x_{23}}^2}\right)^2 \nonumber
   \\&& - \frac{\left(k (p-2) {x_2}^2+(k-2) {x_{23}}^2+{k_1} {x_2}^2
   {x_{23}}^2\right)^2}{4 k (p-1) {x_2} {x_{23}}^2}+({k_1}-2) {x_2} {x_{23}}^2. 
\end{eqnarray}
We see  
 the value $\displaystyle \frac{k (p-2) {x_2}^2+(k-2) {x_{23}}^2+{k_1} {x_2}^2 {x_{23}}^2}{2 k (p-1) {x_2} {x_{23}}^2}$  is positive, whenever $x_2$, $x_{23}$ are positive values,  and the value of $g_1$ at $x_1=0$ is given by $({k_1}-2) {x_2} {x_{23}}^2 > 0$. Therefore, if  the quadratic equation $g_1=0$ has real solutions, these are positive.  Hence, we see that real solutions $x_1$ are positive. 
   \end{proof}

\begin{remark} 
  For $k_1 = 3, k=4, p=4$, we see that 
\begin{equation*}
\begin{array}{l} 
H_1(x_{23})= 13662 {x_{23}}^8-63180 {x_{23}}^7+247464
   {x_{23}}^6-698256 {x_{23}}^5+1424064
   {x_{23}}^4\\-2312544 {x_{23}}^3+2775392
   {x_{23}}^2-2006368 {x_{23}}+629216
      \end{array}
   \end{equation*} and the equation $H_1(x_{23})=0$ has no real solutions. Thus, in general, we need the condition $k_1 \geq k$ to show the existence of non naturally reductive Einstein metric on $\SO(n)$  of the form  {\em(\ref{metric009})}. 
\end{remark}

\begin{theorem}\label{thm6.3}
 Let $p\ge 3$ and $k\geq 3$.  If  ${k_{1}} \geq 10 k p$, then  the Lie group  $\SO(n)$ {\em ($n=k_1 +(p-1)k$) }admits at least four  $\Ad( \SO(k_1)\times(\SO(k))^{p-1})$-invariant Einstein metrics of the form {\em (\ref{metric009})}, which are not naturally reductive. 
\end{theorem}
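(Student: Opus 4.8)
The plan is to follow the scheme of Theorem \ref{thm6.2} and reduce the existence of Einstein metrics of the form (\ref{metric009}) to counting the positive roots of the single eighth–degree polynomial $H_1(x_{23})$. After setting $x_{12}=1$ and imposing $x_2\neq x_{23}$ (so that the metrics are not naturally reductive, by Proposition \ref{prop5.1}), I recover $x_2$ from (\ref{sakane:eqx_2}) and $x_1$ from (\ref{eq_x1_rat}), and the positivity of $x_1$ is guaranteed exactly as in Theorem \ref{thm6.2} through the quadratic form (\ref{quadratic_g1}): since the value of $g_1$ at $x_1=0$ equals $(k_1-2)x_2 x_{23}^2>0$, every real root $x_{23}>0$ of $H_1$ produces a genuine positive triple $(x_1,x_2,x_{23})$. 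Hence it suffices to exhibit four positive roots of $H_1$.

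Two of these roots are already furnished by Theorem \ref{thm6.2}: the sign data $H_1(0)>0$, $H_1(1)<0$, $H_1(2)>0$ (valid for $k_1>k$, hence a fortiori for $k_1\ge 10kp$) give roots in $(0,1)$ and $(1,2)$. To obtain two further roots, I would produce two additional test points $0<u_1<u_2<1$ with $H_1(u_1)<0$ and $H_1(u_2)>0$; together with the three signs above this yields the alternating pattern $+,-,+,-,+$ at the five abscissae $0,u_1,u_2,1,2$, and the intermediate value theorem then gives four distinct positive roots, one in each of $(0,u_1),(u_1,u_2),(u_2,1),(1,2)$.

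The location of $u_1,u_2$ is dictated by the behaviour of $H_1$ near $x_{23}=0$. Collecting the leading powers of $k_1$ in each coefficient of $H_1$ shows that, as $k_1\to\infty$,
\[
H_1(x_{23})\ \sim\ k_1^{5}\,x_{23}^{6}\,(x_{23}-1)^2 ,
\]
so the roots concentrate near $x_{23}=0$ and $x_{23}=1$; the double zero at $1$ splits into the two roots of Theorem \ref{thm6.2} because $H_1(1)=(k-1)(k-k_1)(kp-k+k_1-1)(kp-k+k_1)^2\sim -(k-1)k_1^{4}<0$. Near $0$ the envelope is small and the subleading coefficients dominate: the coefficients of $x_{23}$ and $x_{23}^2$ in $H_1$ are negative of orders $k_1^{2}$ and $k_1^{3}$, which pulls $H_1$ below zero, while the coefficient of $x_{23}^{6}$ (of order $+k_1^{5}$) pulls it back up. Thus $H_1$ has a genuine negative dip on an interval whose endpoints scale like $k_1^{-1}$ and $k_1^{-1/2}$; concretely one checks, by the same leading–order bookkeeping, that $H_1\big(1/\sqrt{k_1}\big)\approx -k_1^{2}(2kp-1)<0$, so $u_1$ can be taken at the scale $k_1^{-1/2}$, whereas any fixed $u_2\in(0,1)$ (for instance $u_2=\tfrac12$) satisfies $H_1(u_2)>0$ once $k_1$ is large, the positive envelope term $k_1^{5}u_2^{6}(u_2-1)^2$ then dominating; note $u_2=\tfrac12$ lies to the left of the near–$1$ root since $k_1\ge 10kp>4(k-1)$.

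The main obstacle is making the two new sign estimates \emph{uniform} in $k,p$ under the single hypothesis $k_1\ge 10kp$, rather than only in the limit $k_1\to\infty$ with $k,p$ held fixed. Because the subleading coefficients of $H_1$ themselves carry factors growing with $kp$, the competition between the $k_1^{5}$ envelope term and the lower–order terms is genuine when $k_1$ is merely a bounded multiple of $kp$; the constant $10$ is exactly what is needed to certify $H_1(u_1)<0<H_1(u_2)$ in every case. I expect this step to require grouping the enormous coefficients of $H_1$ and bounding them crudely but carefully, for example by writing each coefficient as a polynomial in $(k_1-k)$, $(k-3)$ and $(p-3)$ with manifestly signed terms, as is already done for $H_1(2)$ in (\ref{sakane:values_h(x23)}), and then invoking $k_1\ge 10kp$ to fix the sign at the two $k_1$–dependent abscissae. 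The remaining verifications—that the four values of $x_{23}$ yield positive $x_1,x_2$ and that none of them equals $x_2$ or $1$, so that all four metrics are non–naturally reductive by Proposition \ref{prop5.1}—are then immediate, exactly as in Theorem \ref{thm6.2}.
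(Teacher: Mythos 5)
Your overall scaffolding is the same as the paper's: keep the three sign evaluations $H_1(0)>0$, $H_1(1)<0$, $H_1(2)>0$ from Theorem \ref{thm6.2}, insert two further abscissae $0<u_1<u_2<1$ with $H_1(u_1)<0<H_1(u_2)$ to get the alternating pattern $+,-,+,-,+$, and recover positive $x_1,x_2$ exactly as before via (\ref{sakane:eqx_2}), (\ref{eq_x1_rat}) and (\ref{quadratic_g1}). The paper does precisely this, taking $u_1=\frac{k^2p^2+k^2p-2k^2-6kp+4k+4}{k_1(kp-2)}$ (negativity proved for all $k_1\ge kp$ by an exact decomposition into manifestly signed terms), $u_2=2/3$ (positivity proved for $k_1\ge 10kp$ by expanding $H_1(2/3)$ in powers of $(k_1-10kp)$ with positive coefficients $p_j(k,p)$), together with the elementary verification $u_1<2/3$. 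So the architecture of your proposal is right, and your final remarks about non-natural reductivity are fine.

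However, your concrete choice $u_1=1/\sqrt{k_1}$ sits at the wrong scale, and the asymptotic bookkeeping behind it has a sign error, so the one genuinely new step of your proof fails. The coefficient of $x_{23}^2$ in $H_1$ is not negative of order $k_1^3$: its leading part is $k_1^3p^2k^2-4k_1^3pk+4k_1^3=k_1^3(pk-2)^2>0$. Collecting all contributions at $x_{23}=k_1^{-1/2}$, the even degrees $6,4,2$ give, at order $k_1^2$, the amount $\bigl(1+(2pk-4)+(pk-2)^2\bigr)k_1^2=(pk-1)^2k_1^2>0$, while the odd degrees contribute only $-2p^3k^3k_1^{3/2}\bigl(1+o(1)\bigr)$. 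Hence $H_1(1/\sqrt{k_1})\approx (pk-1)^2k_1^2-2p^3k^3k_1^{3/2}$, which is \emph{positive} as soon as $\sqrt{k_1}$ exceeds roughly $2pk$ --- for instance $k=p=3$ and $k_1$ of order $10^3$, a range fully allowed by the hypothesis $k_1\ge 10kp$. So your claimed estimate $H_1(1/\sqrt{k_1})\approx -k_1^2(2kp-1)<0$ has the wrong sign exactly in the large-$k_1$ regime you invoke, and no uniform coefficient-grouping can rescue this test point. The genuine negative dip of $H_1$ near the origin is confined to the scale $x_{23}\sim pk/k_1$: writing $x_{23}=c/k_1$, the three lowest-degree terms dominate and give $H_1\approx k_1\bigl(p^4k^4-2cp^3k^3+c^2(pk-2)^2\bigr)$, which is negative only for $c$ in an interval of the form $pk+O(\sqrt{pk})$. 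This is why the paper's negative test point is the rational point $\approx\frac{k(p+1)-2}{k_1}$ quoted above; to complete your proof you must relocate $u_1$ to that scale and then run the positivity-decomposition argument you describe (which is exactly what the paper carries out), while your $u_2$ step (any fixed point such as $1/2$ or $2/3$) is salvageable by the same technique.
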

    \begin{proof} 
  In the proof of Theorem \ref{thm6.2} we showed that the equation $H_1(x_{23}) =0$   has at least two solutions.  So we will first prove that $H_1(x_{23}) =0$ has two more solutions. 
We claim that $H_1(2/3) >0$ for ${k_{1}} \geq 10 k p$, $k \geq 3$ and $p \geq 3$. 
Indeed, we see that 
\begin{equation}\label{H_1(2/3)}
H_1(2/3) = \frac{1}{3^8}\Big(\sum_{j=0}^{4} p_j(k,   p)({k_{1}}-10 k p)^j+ 64 ({k_{1}}-10 k p)^5\Big),
\end{equation}
where 
\begin{equation*}\label{p_j(k, p)}
{ \begin{array}{l}
p_0(k,   p) = \big(8477931 (k-3)^5+127168965 (k-3)^4+763013790 (k-3)^3\\
+2289041370
   (k-3)^2+3433562055 (k-3)+2060137233\big) (p-3)^5\\
   +\big(105475930
   (k-3)^5+1609348993 (k-3)^4+9819354216 (k-3)^3\\
   +29947843422 (k-3)^2+45656436294
   (k-3)+27834664473\big) (p-3)^4\\
   +\big(515065215 (k-3)^5+8036819231
   (k-3)^4 +50080603950 (k-3)^3\\
   +155804805126 (k-3)^2+242027588979
   (k-3)+150194316987\big) (p-3)^3\\
   +\big(1225798700 (k-3)^5+19719309159
   (k-3)^4+126263852928 (k-3)^3\\
   +402496759170 (k-3)^2+639093441636
   (k-3)+404538563655\big) (p-3)^2\\
   +\big(1406250300 (k-3)^5+23633211600
   (k-3)^4+156904304004 (k-3)^3\\
   +515627980956 (k-3)^2+840207388320
   (k-3)+543815227764\big) (p-3)\\
   +609894432 (k-3)^5+10964539404
   (k-3)^4+76557997800 (k-3)^3\\
   +261601463052 (k-3)^2+439706500272
   (k-3)+291848917536, \\
   p_1(k,   p) = \big(4056465 (k-3)^4+48677580 (k-3)^3+219049110 (k-3)^2\\
   +438098220
   (k-3)+328573665\big) (p-3)^4
   +\big(40943714 (k-3)^4\\
   +500737232
   (k-3)^3
   +2295674532 (k-3)^2+4676063040 (k-3)\\+3570582762\big)
   (p-3)^3
   +\big(152710729 (k-3)^4+1913265216 (k-3)^3\\+8971418610
   (k-3)^2 
   +18663109560 (k-3)+14535152973\big) (p-3)^2\\+\big(248336188
   (k-3)^4
   +3210990756 (k-3)^3+15479652636 (k-3)^2\\+33001360668
   (k-3)
   +26268727464\big) (p-3) 
   +147552132 (k-3)^4\\+1990966824 (k-3)^3
   +9937811016
   (k-3)^2+21806079336 (k-3)\\+17782338564, \\
     p_2(k,   p) = \big(771308 (k-3)^3+6941772 (k-3)^2+20825316 (k-3)\\
  +20825316\big) (p-3)^3+\big(5911096 (k-3)^3+54414120 (k-3)^2\\
  +166885128 (k-3) +170527896\big) (p-3)^2+\big(14927996 (k-3)^3\\
  +141302436
   (k-3)^2+444605220 (k-3)+465149628\big) (p-3)\\
   +12385712 (k-3)^3+121422408
   (k-3)^2+393679584 (k-3)+422648136,\\
         \end{array}}
  \end{equation*}
\begin{equation*}
{ \begin{array}{l}
  p_3(k,   p) = \big(72884 (k-3)^2+437304 (k-3)+655956\big) (p-3)^2\\
  +\big(376440
   (k-3)^2+2327872 (k-3)+3595656\big) (p-3)+481732 (k-3)^2\\
   +3088712  (k-3)+4925700,
  \\
  p_4(k,   p) = (3424 (k-3)+10272) (p-3)+8928 (k-3)+28256. 
      \end{array}
      }
   \end{equation*} 
   Thus we see $p_j(k,   p) > 0$ for $j =0, 1, 2, 3, 4$ and  $H_1(2/3) >0$ for ${k_{1}} \geq 10 k p$, $k \geq 3$ and $p \geq 3$. 
   
   \smallskip
   
   Now we claim that $H_1\Big(\displaystyle\frac{k^2 p^2+k^2 p-2 k^2-6 k p+4 k+4}{{k_1} (k p-2)}\Big) < 0$ for ${k_{1}} \geq  k p$, $k \geq 3$ and $p \geq 3$. Note that $k^2 p^2+k^2 p-2 k^2-6 k p+4 k+4 = (k (p-1)-2) (k (p+2)-4)+4 (k-1) > 0$. 
  
By the same argument as  in $H_1(2/3)$,  we see that 
   \begin{equation*}
   \begin{array}{l}H_1\Big(\displaystyle\frac{k^2 p^2+k^2 p-2 k^2-6 k p+4 k+4}{{k_1} (k p-2)}\Big) \\
\displaystyle   = -\frac{\big(k^2 p^2+k^2 p-2 k^2-6 k p+4 k+4\big)^2}{{k_1}^6 (k p-2)^8}\Big(\sum_{j=0}^{6} q_j(k,   p)({k_{1}}-  k p)^j \Big),
     \end{array}
   \end{equation*}
where 
 $q_j(k,   p) > 0$ ($j =0, 1,\ldots,  6$) for  $k \geq 3$ and $p \geq 3$.  
 
Note that $\displaystyle\frac{k^2 p^2+k^2 p-2 k^2-6 k p+4 k+4}{{k_1} (k p-2)} < \frac{2}{3}$ for ${k_{1}} \geq  10 k p$, $k \geq 3$ and $p \geq 3$. 

In fact, \begin{equation*}
\begin{array}{l}
\displaystyle\frac{2}{3} - \frac{k^2 p^2+k^2 p-2 k^2-6 k p+4 k+4}{{k_1} (k p-2)} \\
\displaystyle= \frac{17 k^2 (p-3)^2+\left(99 k^2-22 k\right) (p-3)+(2 k p-4)
   ({k_1}-10 k p)+150 k^2-78 k-12}{3 {k_1} (k p-2)}>0
      \end{array} 
\end{equation*}
Thus, we see that there are four  solutions $\alpha_i$ $(i =1,2,3,4)$ of the equation $H_1(x_{23})  = 0$ with
   $$0 < \alpha_1 <  \displaystyle\frac{k^2 p^2+k^2 p-2 k^2-6 k p+4 k+4}{{k_1} (k p-2)} < \alpha_2 < \frac{2}{3} < \alpha_3  < 1 < \alpha_4 < 2 $$
 for ${k_{1}} \geq 10 k p$, $k \geq 3$ and $p \geq 3$. 
    \end{proof}


\begin{thebibliography}{50}
  

 
 \bibitem{A}
A. Arvanitoyeorgos: {\it Progress on homogeneous Einstein manifolds and some open problems}, Bull. Greek Math. Soc. 58 (2010-2015) 75--97. 

 \bibitem{ADN1} 
A. Arvanitoyeorgos, V.V. Dzhepko and Yu. G. Nikonorov:
{\it Invariant Einstein metrics on some homogeneous spaces of classical Lie groups}, 
 Canad. J. Math. 61(6) (2009) 1201--1213.
 


 
 \bibitem{AMS} 
  A. Arvanitoyeorgos, K. Mori and Y. Sakane: 
 {\it  Einstein metrics on compact Lie groups which are not naturally reductive},  Geom. Dedicata 160(1) (2012) 261--285.
 
  
\bibitem{ASS2} 
  A. Arvanitoyeorgos, Y. Sakane and M. Statha: 
 {\it  New Einstein metrics on the Lie group $SO(n)$ which are not naturally reductive}, Geom. Imaging Comput. 2(2) (2015) 77-108. 
 
  \bibitem{ASS3} 
  A. Arvanitoyeorgos, Y. Sakane and M. Statha: 
 {\it Einstein metrics on the symplectic group which are not naturally reductive}, in:  {\it Current Developments in Differential Geometry and its Related Fields}, Proceedings of the 4th International Colloquium on Differential Geometry and its Related Fields, Velico Tarnovo, Bulgaria 2014, World Scientific 2015, 1--22. 
 
  \bibitem{ASS4} 
    A. Arvanitoyeorgos, Y. Sakane and M. Statha: 
 {\it Einstein metrics on special unitary groups  $\SU(2n)$}, in:  {\it Recent Topics in Differential Geometry and its Related Fields}, Proceedings of the 6th International  Colloquium on Differential Geometry and its Related Fields, Velico Tarnovo, Bulgaria 2018, World Scientific 2019, 5-27. 
 
 
 \bibitem{ASS5} 
  A. Arvanitoyeorgos, Y. Sakane and M. Statha: 
 {\it Invariant Einstein metrics on  ${\rm SU}(N)$ and complex Stiefel manifolds}, Tohoku Math. J. 72(2) (2020) 161-210. 
 
  
\bibitem{Be} 
A. L. Besse: 
 {\it Einstein Manifolds}, 
  Springer-Verlag, Berlin, 1986. 
  
   \bibitem{CL}
   Z. Chen and K. Liang: {\it Non-naturally reductive Einstein metrics on the compact simple Lie group $F_4$},
  Ann. Glob. Anal. Geom. 46 (2014) 103--115.
  
     \bibitem{CCD}
  H. Chen, Z. Chen, and S. Deng: {\it Non-naturally reductive Einstein metrics on $\SO(n)$}, Manuscripta Math. 156(1-2) (2018) 127--136. 
  
\bibitem{CC}
 Z. Chen and H. Chen: {\it Non-naturally reductive Einstein metrics on $\Sp(n)$}, Front. Math. China 15(1) (2020) 47--55.
  
  \bibitem{DZ}
  J. E. D' Atri and W. Ziller:
  {\it Naturally reductive metrics and Einstein metrics on compact Lie groups},
  Memoirs Amer. Math. Soc. 19 (215) (1979).
   
   
 \bibitem{M}
 K. Mori:
 {\it Left Invariant Einstein Metrics on $SU(N)$ that are not Naturally Reductive},
 Master Thesis (in Japanese) Osaka University 1994, English Translation: Osaka University RPM 96010 (preprint series) 1996.
 
    
   \bibitem{N}
   Y.G. Nikonorov: {\it Classification of generalized Wallach spaces}, Geom. Dedicata 181 (2016) 168--172.  Correction: Deom. Dedicata (2021).
   
 \bibitem{PS}
  J-S. Park and Y. Sakane:
  {\it Invariant Einstein metrics on certain homogeneous spaces},
  Tokyo J. Math.  20(1) (1997) 51--61.
  
  \bibitem{PM}
  M. Patr\~ao, L.A.B. San Martin, L.J. dos Santos and L. Seco: {\it Orientability of vector bundles over real flag manifolds}, Topol. Appl. 159(10--11) (2012) 2774--2786.
  
  \bibitem{R}
 S. Ramanujam:
 {\it Application of Morse theory to some homogeneous spaces}, Tohoku Math. J. 21(3) (1969) 343--353.  
 
 
\bibitem{W1}
M. Wang: {\it Einstein metrics from symmetry and bundle constructions}, In: Surveys in Differential Geometry: Essays on Einstein Manifolds. Surv. Differ. Geom. VI, Int. Press, Boston, MA 1999.

\bibitem{W2}
M. Wang: {\it Einstein metrics from symmetry and bundle constructions: A sequel}, In:  Differential Geometry: Under the Influence of S.-S. Chern, in: Advanced Lectures in Mathematics, vol. 22, Higher Education Press/International Press, 2012, pp. 253--309.  
  
   \bibitem{WZ}
  M. Wang and W. Ziller: 
  {\it Existence and non-existence of homogeneous Einstein metrics}, 
  Invent.~Math.~84 (1986)  177--194. 
  
  \bibitem{ZCT}
  B. Zhang, H. Chen and J. Tan:
  {\it New non-naturally reductive Einstein metrics on $\SO(n)$}, Intern. J. Math. 29(11) (2018) 1850083 (13 pages). 
  
 
  \end{thebibliography}
\end{document}